\newtheorem{thm}{Theorem}
\newtheorem{lem}[thm]{Lemma}
\newtheorem{cor}[thm]{Corollary}
\theoremstyle{definition}
\DeclareMathOperator{\ext}{ext} 
\DeclareMathOperator{\Gen}{Gen}
\newcommand{\cir}{\mathbb{T}^1}
\renewcommand{\phi}{\varphi}
\newcommand{\M}{\mathcal{M}}
\DeclareMathOperator{\MT}{\mathcal{M}_{\mathit T} }
\DeclareMathOperator{\Ms}{\mathcal{M}_{\mathit \sigma} }
\newcommand{\Mst}{\M_{\mathit \sigma}^{(t)}}
\DeclareMathOperator{\MTe}{\M_{\mathit T}^e}
\DeclareMathOperator{\Mse}{\M_{\mathit \sigma}^e}
\newcommand{\BB}{\mathscr{B}}
\newcommand{\bb}{\textbf{\textit{b}}}
\newcommand{\mult}{\ast}
\newcommand{\dbar}{{\bar d}}
\newcommand{\dbarom}{{\dbar}_\Omega}
\newcommand{\dbarm}{{\dbar}_\M}
\newcommand{\bmu}{{\hat \mu}}
\newcommand{\R}{\mathbb{R}}
\newcommand{\Z}{\mathbb{Z}}
\newcommand{\N}{\mathbb{N}}
\newcommand{\Q}{\mathbb{Q}}
\newcommand{\C}{\mathbb{C}}
\author{Jakub Konieczny \and Michal Kupsa \and Dominik Kwietniak}
\address[D. Kwietniak]{
Faculty of Mathematics and Computer Science, Jagiellonian University in Krak\'ow, ul. \L o\-jasiewicza 6, 30-348 Krak\'ow, Poland
\and
Institute of Mathematics, Federal University of Rio de Janeiro, Cidade
Universitaria - Ilha do Fund\~ao, Rio de Janeiro 21945-909, Brazil
}
\email{dominik.kwietniak@uj.edu.pl}
\urladdr{www.im.uj.edu.pl/DominikKwietniak/}
\address[M. Kupsa]{
Institute of Information Theory and Automation, The Academy of Sciences of the Czech Republic, Prague 8, CZ-18208}\email{kupsa@utia.cas.cz}
\address[J. Konieczny]{Mathematical Institute,
University of Oxford,
Andrew Wiles Building,
Radcliffe Observatory Quarter,
Woodstock Road,
Oxford,
OX2 6GG}
\email{jakub.konieczny@gmail.com}
\title[Arcwise connectedness]{Arcwise connectedness of the set of ergodic measures of hereditary shifts}
\date{\today}
\thanks{DK was supported by the  National Science Centre (NCN) grant 2013/08/A/ST1/00275 and partially supported by CAPES/Brazil grant
no. 88881.064927/2014-01. JK was supported by the National Science Centre (NCN) under
grant 2012/07/A/ST1/00185}
\begin{document}

\begin{abstract}  We show that the set of ergodic invariant measures of a shift space with a safe symbol (this includes all hereditary shifts) is arcwise connected when endowed with the $d$-bar metric.
As a consequence the set of ergodic measures of such a shift is also arcwise connected in the weak-star topology and the entropy function over this set attains all values in the interval between zero and the topological entropy of the shift (inclusive). The latter result is motivated by a conjecture of A.~Katok.
\end{abstract}
\subjclass[2010]{
37B05 (primary) 37A35, 37B10, 37B40, 37D20 (secondary)}
\keywords{Kolmogorov-Sinai (metric) entropy, hereditary shift space, Poulsen simplex, Besicovitch pseudometric, $d$-bar metric}
\maketitle

A shift space $X$ over the alphabet $\Lambda=\{0,1,\ldots,n-1\}$ for some $n\ge 2$ is  \emph{hereditary} if $x\in X$ and $y\le x$
(coordinate-\-wise) imply $y\in X$. Hereditary shifts were introduced
by Kerr and Li in \cite[p. 882]{KerrLi} and their basic properties are presented in \cite{hh4}. 
We say that $a\in\Lambda$ is a \emph{safe symbol} for a shift space
$X\subset\Lambda^\Z$ (see \cite{RS}) if for every $z\in\Lambda^\Z$ obtained by replacing some entries in $y\in X$ by $a$, we have that $z\in X$. By definition $0$ is a safe symbol for every hereditary shift.

The family of hereditary shifts includes: spacing shifts, beta shifts, bounded density shifts, $\BB$-admissible shifts; also, many examples of $\mathscr{B}$-free shifts and some shifts of finite type are hereditary.
 All classes on that list have been extensively studied, with the $\mathscr{B}$-free shifts attracting much attention recently (see Section \ref{sex:examples} for more details). In our setting of $\Z$ actions shifts with a safe symbol seem to be less important. The notion is useful in the context of higher dimensional shifts ($\Z^d$ actions with $d\ge 2$, see \cite{RS} and references therein). It is easy to find examples of shift spaces over $\{0,1,2\}$ which have $0$ as a safe symbol, but are not hereditary.

It should be no surprise that there are very few theorems applicable to all members of such a diverse family of shift spaces. Nevertheless, the main result of this note implies that there is a common feature of all hereditary shift spaces: for any hereditary shift $X$ and for every $t\ge 0$ the set of ergodic invariant measures with entropy less than or equal $t$, denoted $\Mst(X)$ and endowed with the $d$-bar metric $\dbarm$ is arcwise connected (Theorem \ref{thm:arcwise}). Our proof shows that it is enough to assume that there exists a safe symbol, and actually shows that there is a $\dbarom$-continuous arc in $X$ consisting of generic points for ergodic measures from the arc in $\Mst(X)$. This is a single orbit result very much in the spirit of \cite{WeissBook}. Here $\dbarom$ is a pseudometric on $X$ given by the upper asymptotic density of the set of indices two sequences in $X$ differ.

The $d$-bar metric $\dbarm$ induces a stronger topology than the usual weak$^*$ topology on the space of ergodic invariant measures. It follows that in the latter topological space the set of ergodic invariant measures with entropy less than or equal $t$ is also arcwise connected for every $t\ge 0$ (Corollary \ref{cor:weak-star-arc-connectedness}). The same holds for sets of ergodic measures with entropy strictly less than $t$. Furthermore, since the entropy function $h$ taking an ergodic measure $\mu$ to its metric entropy $h(\mu)$ is $\dbarm$ continuous, it has the Darboux (intermediate-value) property over every arc in $\dbarm$. Denoting the topological entropy of a shift space $X$ by $h_\text{top}(X)$ we say that $X$ has the \emph{intermediate entropy property over ergodic measures} if for every $\alpha\in[0,h_\text{top}(X)]$ there is an ergodic measure $\mu$ with $h(\mu)=\alpha$. In particular, every shift with a safe symbol has the intermediate entropy property and its set of ergodic measures is either a singleton or is uncountable (Corollary \ref{cor:entropy-interval}). 
The former case occurs if and only if $h_\text{top}(X)=0$ (Corollary \ref{cor:entropy-positive}).

Our result about the intermediate entropy property 
is motivated by the following conjecture due to A.~Katok: If $r>1$ and $F\colon M\to M$ is a $\mathcal{C}^r$  diffeomorphism of a smooth compact manifold $M$, then
for every $\alpha\in [0, h_\text{top}( F ))$, there is an ergodic measure $\mu$ such that the metric entropy of $F$ with respect to $\mu$ equals $\alpha$.
Katok proved that this is the case if $M$ is a compact surface,
where every ergodic measure of positive entropy is hyperbolic (\cite{Katok}, for detailed proof see
\cite[Theorem S.5.9]{KM}). Katok's result was extended to certain skew product
cases by Sun \cite{Sun1, Sun2}. The conjecture also holds for every ergodic linear automorphism of the torus as a result of work of Quas and Soo \cite{QS} and for some partially hyperbolic diffeomorphism by Ures \cite{Ures}.

The approach presented here is different from the methods used in  \cite{Katok, QS, Sun1, Sun2, Ures}. After publishing this paper on arXiv we learned that similar techniques were applied in \cite{Ayse}.

We also describe the construction of some examples illustrating that our main theorem may not hold without the assumption that the shift space is hereditary. By the same examples one can see that the conclusions of Corollary \ref{cor:weak-star-arc-connectedness} and Corollary \ref{cor:entropy-interval} are independent: neither of them implies the other. This is a manifestation of the well known fact that the metric entropy function $h$ on the set of invariant measures $\Ms(X)$ of a shift space $X$ endowed with the weak$^*$ topology is, in general, only upper semi-continuous.
On the other hand every Polish topological space is homeomorphic to a set of ergodic measures of some shift space endowed with the weak$^*$ topology (see \cite[Theorem 5]{D} and \cite{H}).
This suggests that there should be plenty of examples of shift spaces without the intermediate entropy property. It is obvious that this is the case if the shift space has at most countably many ergodic invariant measures. It is less obvious if the set of ergodic measures is arcwise connected in the weak$^*$ topology. Using a characterization of possible entropy functions due to Downarowicz and Serafin \cite{DS} we show that for every non-trivial Polish topological space $P$ there is a shift space $X$ whose set of ergodic measures $\Mse(X)$ endowed with the weak$^*$ topology is homeomorphic with $P$ and the entropy function has an isolated positive value (see Theorem \ref{thm:DS}). By the same technique, we show that for every uncountable Polish topological space $P$ (not necessarily connected!) there is a shift space $X$ with $\Mse(X)=P$ and a non-constant entropy function with the intermediate entropy property (see Theorem \ref{thm:DS2}).

Finally, our first proof of Theorem \ref{thm:arcwise} was based on a relative version of the Furstenberg unique ergodicity theorem (Theorem \ref{thm:Furstenberg} below).
We no longer need this to prove our main theorem, but as we hope it is a result of independent interest we attach it with a proof in the Appendix \ref{sec:appendix-A}.



\section{Definitions}\label{sec:definitions}


By a \emph{dynamical system} we mean a pair $(X,T)$, where $X$ is a compact metric space and $T\colon X\to X$ is a homeomorphism. 
If $(Y,S)$ is an another dynamical system and $\pi\colon X \to Y$ is a continuous map onto with $\pi \circ T = S \circ \pi$ then we call $\pi$ a \emph{factor map}, $Y$ a \emph{factor of $X$}, and $X$  an \emph{extension of $Y$}.


The set of all Borel probability measures on $X$ is denoted by $\M(X)$.
The usual weak$^*$ topology makes $\M(X)$ a compact metrizable space.

Let $\MT(X)$ denote the set of $T$-invariant measures in $\M(X)$. We write $\MTe(X)$ for the set of all ergodic measures in $\MT(X)$,
and $h(\mu)$ denotes the Kolmogorov-Sinai entropy of $\mu$. For each $\mu\in\MT(X)$ we call the triple $(X,T,\mu)$ a \emph{measure preserving system}.
Note that this is more restrictive than the usual definition.

A point $x \in X$ is \emph{generic} for a measure $\mu\in\MT(X)$ if for each continuous function $\phi\colon X\to\R$ we have
\[
\lim_{N\to\infty}\frac{1}{N}\sum_{j=0}^{N-1} \phi(T^j(x))=\int_X \phi\,d\mu.
\]
The set of all generic points for $\mu$ is denoted by $\Gen_T(\mu)$. 
We write $\Gen(X,T)$ for the set of points that are generic for some $T$-invariant measure.
For $x\in \Gen(X,T)$ we denote by $\bmu(x)$ the measure for which $x$ is generic.

Fix $n\ge 2$ and let $\Lambda$ be a finite set with $n$ elements, without loss of generality we assume that $\Lambda=\{0,1,\ldots,n-1\}$.
Let $\Omega=\Lambda^\Z$ be the set of all two-sided $\Lambda$-valued sequences.
We equip $\Omega$ with the product (Tikhonov) topology induced by the discrete topology on $\Lambda$.
The \emph{shift map} $\sigma\colon\Omega\to\Omega$ is given by $\sigma(x)_i=x_{i+1}$.
By a \emph{shift space} (for short \emph{a shift}) we mean any nonempty closed set $X\subset\Omega$ such that $\sigma(X)=X$. 


\section{Disjointness and spectral theory}\label{sec:spectral}

In this section we will review some basic facts from the spectral theory of Koopman operators. Since these results are classical we have not attempted to document the source of every of them (see \cite{GlasnerBook,Parry}). We need them for a proof of Theorem \ref{thm:disjoint-rotation}, which may be known among 
aficionados, but we were unable to find it in the literature.

Using the map $\psi(t)=\exp(2\pi i t)$ we identify the quotient group $\cir = \R/\Z$ equipped with addition $\bmod\,1$ and the circle, that is, the multiplicative group  $\mathbb{S}^1=\{z\in\C: |z|=1\}$.  By $\lambda$ we denote the Lebesgue measure on $\cir$ and its image (push forward) through $\psi$ on $\mathbb{S}^1$. Given $\alpha\in\R\setminus\Q$ the map $R_\alpha\colon\cir\to\cir$ given by $R_\alpha(t)=t+\alpha\bmod 1$ is called an \emph{irrational rotation} of the circle. It is well known that $\lambda$ is the unique invariant measure for $R_\alpha$.

Given a measure-preserving system $(X,T,\mu)$ we define the \emph{Koopman operator} $U_T\colon L^2(X,\mu)\to L^2(X,\mu)$ by $U_T(f)=f\circ T$. The Koopman operator is unitary, hence for each $f\in L^2$ the bi-infinite sequence $r_n=\langle U^n_T(f),f\rangle$ is a positive definite sequence of complex numbers. By Herglotz theorem the positive definite sequence $\langle U^n_T(f),f\rangle$ determines uniquely a finite non-negative Borel measure on circle $\cir$, called the \emph{spectral measure of $f$} and denoted by $\sigma_f$.

A \emph{cyclic subspace} determined by $f\in L^2(X,\mu)$, denoted by $Z(f)$ is the closure of the linear span of $\{U^n(f):n\in\Z\}$. A cyclic subspace is \emph{maximal} if it is not contained in any larger cyclic subspace.
The spectral theorem for unitary operators allows us to define the \emph{spectral type} $\sigma_T$, which is (up to equivalence) the spectral measure determined by any vector determining a maximal cyclic subspace. The spectral type $\sigma_T$ can be written as $\sigma_T=\sigma_d + \sigma_c$, where $\sigma_d$ is purely atomic measure and $\sigma_c$ is a continuous (non-atomic) measure.

The \emph{point spectrum} of a measure-preserving transformation $(X,T,\mu)$ is the set of eigenvalues for the Koopman operator $U_T(f)=f\circ T$, i.e.
\[
H(T,\mu)=\{\lambda \in\mathbb{C}: \lambda f=f\circ T \text{ for some }f\in L^2(X,\mu)\text{ with }f\neq 0\}.
\]
The point spectrum is a subset of the unit circle $\mathbb{S}^1\subset \C$. Since $L^2(X,\mu)$ is separable, the point spectrum is at most countable and, in the ergodic case, it forms a multiplicative subgroup of $\mathbb{S}^1$. Furthermore, the spectral measure of each eigenvector is discrete. The purely atomic part of the spectral type is closely connected with 
spectral measures associated to eigenvectors; in particular, $\sigma_d$ is supported on $H(T,\mu)$.


Let $(X,T,\mu)$ and $(Y,S,\nu)$ be measure preserving systems.
A \emph{joining of $(X,T,\mu)$ and $(Y,S,\nu)$} is a $T\times S$-invariant measure $\eta$ on $X\times Y$ with marginals $\mu$ and $\nu$.
We write $J(\mu,\nu)$ for the set of all joinings of $(X,T,\mu)$ and $(Y,S,\nu)$.
We say that $(X,T,\mu)$ and $(Y,S,\nu)$  are \emph{disjoint} if $\mu\times\nu$ is the only joining of these two systems.

We will also use in the proofs of our theorems the description of the spectrum for rotations and the characterization of the ergodicity of the product of two transformations via their spectra. If $R_\alpha$ is an irrational rotation, then the spectrum $H(R_\alpha,\lambda)$ with respect to the invariant Lebesgue measure $\lambda$ is generated by
$e^{2\pi i \alpha}$, i.e.
$$H(R_\alpha,\lambda)=\{\exp(2\pi i k\alpha):k\in\Z\}.$$
Furthermore, the spectral type of the Koopman operator associated with $(\cir, R_\alpha,\lambda)$ is purely atomic.

\begin{lem}\label{lem:product_erg}
The product of two ergodic measure-preserving systems $(X,T,\mu)$ and $(Y,S,\nu)$ is ergodic if and only if their point spectra have trivial intersection, i.e. $H(T,\mu)\cap H(S,\nu)=\{1\}$.
\end{lem}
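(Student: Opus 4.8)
The plan is to argue spectrally, exploiting the standard identification of $L^2(X\times Y,\mu\times\nu)$ with the space of Hilbert--Schmidt operators from $L^2(X,\mu)$ to $L^2(Y,\nu)$, together with the fact that $T\times S$ is ergodic if and only if every $U_{T\times S}$-invariant function in $L^2(\mu\times\nu)$ is constant.

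First I would dispose of the easy implication in its contrapositive form. Suppose $\lambda\in H(T,\mu)\cap H(S,\nu)$ with $\lambda\neq 1$, and pick nonzero $f\in L^2(X,\mu)$ and $g\in L^2(Y,\nu)$ with $f\circ T=\lambda f$ and $g\circ S=\lambda g$. Set $h(x,y)=f(x)\overline{g(y)}$. Then $h\circ(T\times S)=\lambda\bar\lambda\,h=h$ since $|\lambda|=1$, so $h$ is $U_{T\times S}$-invariant; moreover $\|h\|_2=\|f\|_2\|g\|_2>0$, while $\int f\,d\mu=\int f\circ T\,d\mu=\lambda\int f\,d\mu$ forces $\int f\,d\mu=0$, hence $\int h\,d(\mu\times\nu)=0$. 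So $h$ is a nonconstant invariant function and $T\times S$ is not ergodic.

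For the converse I assume $H(T,\mu)\cap H(S,\nu)=\{1\}$ and take $F\in L^2(\mu\times\nu)$ with $F\circ(T\times S)=F$; the goal is to show $F$ is a.e.\ constant. After subtracting its mean it suffices to treat $G:=F-\int F\,d(\mu\times\nu)$, which is again invariant and has $\int G\,d(\mu\times\nu)=0$, and to reach a contradiction from $G\neq 0$. Identify $G$ with the Hilbert--Schmidt operator $A_G\colon L^2(X,\mu)\to L^2(Y,\nu)$ given by $(A_G\phi)(y)=\int G(x,y)\phi(x)\,d\mu(x)$; since $\|A_G\|_{\mathrm{HS}}=\|G\|_2$, this operator is nonzero and compact. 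Using that $\mu$ is $T$-invariant and that $G(x,y)=G(Tx,Sy)$ one checks the intertwining relation $A_G U_T=U_S A_G$, and hence (taking adjoints) that the positive compact operator $A_G^{*}A_G$ on $L^2(X,\mu)$ commutes with the unitary $U_T$.

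Now the key step. Being nonzero, positive and compact, $A_G^{*}A_G$ has an eigenvalue $c>0$ whose eigenspace $V\subset L^2(X,\mu)$ is finite dimensional; as $A_G^{*}A_G$ commutes with $U_T$, the space $V$ is $U_T$-invariant, so the unitary $U_T|_V$ has an eigenvector: there is $f\in V$, $f\neq 0$, with $U_T f=\lambda f$, $|\lambda|=1$, so $\lambda\in H(T,\mu)$. Then $\|A_G f\|_2^2=\langle A_G^{*}A_G f,f\rangle=c\|f\|_2^2>0$, and $U_S(A_G f)=A_G U_T f=\lambda A_G f$, so $\lambda\in H(S,\nu)$ as well; by hypothesis $\lambda=1$. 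Thus $f$ is a nonzero $U_T$-invariant function, hence a nonzero constant by ergodicity of $(X,T,\mu)$, and $A_G f$ is a nonzero $U_S$-invariant function, hence a nonzero constant by ergodicity of $(Y,S,\nu)$. But $A_G f$ is a (nonzero) constant multiple of $y\mapsto\int G(x,y)\,d\mu(x)$, whose integral over $Y$ is $\int G\,d(\mu\times\nu)=0$, and a nonzero constant cannot integrate to $0$. This contradiction gives $G=0$, so $F$ is constant and $T\times S$ is ergodic. I expect the main obstacle to be exactly this converse, and inside it the situation where $U_T$ or $U_S$ has large continuous spectral type, so that the point spectra carry too little information to be used directly; the device that overcomes it is precisely that $A_G^{*}A_G$ is \emph{compact} and therefore manufactures a genuine eigenvalue of $U_T$ no matter what. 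Apart from that, the only points needing care are the complex conjugates in the $L^2$--Hilbert--Schmidt identification and the bookkeeping in deriving $A_G U_T=U_S A_G$.
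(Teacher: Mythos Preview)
The paper does not supply its own proof of this lemma; it merely records that the statement ``can be found e.g.\ in \cite{Parry}, page 46, Exercise 2.'' So there is no argument in the paper to compare against.

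Your proof is correct and self-contained. The easy implication is standard, and for the converse the Hilbert--Schmidt identification is exactly the right device: the intertwining $A_G U_T=U_S A_G$ follows from invariance of $G$ and of $\mu$, and the crucial observation---that compactness of $A_G^{*}A_G$ forces a nontrivial finite-dimensional $U_T$-invariant subspace, hence a genuine eigenvector of $U_T$, regardless of whether the spectral type has a continuous part---is precisely what makes the argument go through. The final contradiction via $\int G\,d(\mu\times\nu)=0$ is clean. One small cosmetic point: rather than arguing by contradiction, you could simply note that once $\lambda=1$ the argument shows that every positive eigenvalue $c$ of $A_G^{*}A_G$ has $\mathbf{1}_X$ in its eigenspace, which is impossible for more than one $c$; but what you wrote is fine as it stands.
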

This fact might be not a part of classical textbooks on ergodic theory, but can be found e.g. in \cite{Parry}, page 46, Exercise 2. In similar spirit, we have the following criterion for disjointness.

\begin{thm}[Thm. 6.28 in \cite{GlasnerBook}]\label{thm:Glasner-disjoint}
If  $(X,T,\mu)$ and $(Y,S,\nu)$ are measure preserving systems whose spectral types are singular except for the common atom at $1$, 
then $(X,T,\mu)$ and $(Y,S,\nu)$ are disjoint.
\end{thm}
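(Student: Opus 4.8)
The plan is to run the standard spectral-disjointness argument inside the $L^2$-space of an arbitrary joining. Let $\eta\in J(\mu,\nu)$; we must show $\eta=\mu\times\nu$. Put $H=L^2(X\times Y,\eta)$ and let $U$ be the Koopman operator of $T\times S$ on $H$. The coordinate projections $\pi_X\colon X\times Y\to X$ and $\pi_Y\colon X\times Y\to Y$ induce operators $j_X\colon L^2(X,\mu)\to H$, $f\mapsto f\circ\pi_X$, and $j_Y\colon L^2(Y,\nu)\to H$, $g\mapsto g\circ\pi_Y$; because the marginals of $\eta$ are $\mu$ and $\nu$, these are isometric embeddings, and they intertwine the Koopman operators, $U\circ j_X=j_X\circ U_T$ and $U\circ j_Y=j_Y\circ U_S$. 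To prove $\eta=\mu\times\nu$ it suffices to check, for $f\in L^2(X,\mu)$ and $g\in L^2(Y,\nu)$, that $\int_{X\times Y}(f\circ\pi_X)(g\circ\pi_Y)\,d\eta=\int f\,d\mu\int g\,d\nu$. Writing $f=\int f\,d\mu+f_0$ and $g=\int g\,d\nu+g_0$ with $f_0,g_0$ of zero mean, and using that $j_Xf_0$ and $j_Yg_0$ are orthogonal in $H$ to the constant functions, the mixed terms drop out, so the whole claim reduces to the single orthogonality relation $j_Xf_0\perp j_Yg_0$ in $H$ for all zero-mean $f_0,g_0$ (the complex case being no different, since $\overline{g_0}$ is also of zero mean).

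Next I would feed in the spectral information reviewed above. Since $j_X$ is an isometric intertwiner, $\langle U^nj_Xf_0,j_Xf_0\rangle_H=\langle U_T^nf_0,f_0\rangle_\mu$ for all $n\in\Z$, so by the Herglotz theorem the spectral measure of $j_Xf_0$ with respect to $U$ on $H$ coincides with the spectral measure $\sigma_{f_0}$ of $f_0$ with respect to $U_T$; in particular $\sigma_{f_0}$ is absolutely continuous with respect to the spectral type $\sigma_T$. Moreover the mass $\sigma_{f_0}(\{1\})$ equals the squared norm of the orthogonal projection of $f_0$ onto the space of $T$-invariant functions in $L^2(X,\mu)$; this vanishes, since (in the ergodic case, which is all that is needed) that space consists of constants while $f_0$ has zero mean. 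Hence $\sigma_{j_Xf_0}$ is carried by $\cir\setminus\{1\}$ and is absolutely continuous there with respect to $\sigma_T$, and symmetrically $\sigma_{j_Yg_0}$ is carried by $\cir\setminus\{1\}$ and $\ll\sigma_S$. By hypothesis $\sigma_T$ and $\sigma_S$ are mutually singular away from the common atom at $1$, so $\sigma_{j_Xf_0}$ and $\sigma_{j_Yg_0}$ are mutually singular.

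Finally I would invoke the elementary fact that two vectors $u,v$ in a Hilbert space carrying a unitary operator whose spectral measures are mutually singular have orthogonal cyclic subspaces $Z(u)\perp Z(v)$, and in particular $\langle u,v\rangle=0$: if $E$ is the projection-valued spectral measure of $U$, the complex measure $A\mapsto\langle E(A)u,v\rangle$ has total variation dominated by both $\sigma_u$ and $\sigma_v$, via $|\langle E(A)u,v\rangle|\le\|E(A)u\|\,\|E(A)v\|=\sigma_u(A)^{1/2}\sigma_v(A)^{1/2}$, so $\sigma_u\perp\sigma_v$ forces it to vanish identically; evaluating at $A=\cir$ gives $\langle u,v\rangle=0$. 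Applying this with $u=j_Xf_0$ and $v=j_Yg_0$ establishes the orthogonality left over from the first step, whence $\eta=\mu\times\nu$, i.e. the two systems are disjoint. The one point requiring care is the bookkeeping around the atom at $1$: one must be sure that ``singular except for the common atom at $1$'' is precisely the hypothesis that survives restriction to the zero-mean subspaces, i.e. that in each spectral type the atom at $1$ is produced solely by the invariant constants — this is where ergodicity is used — after which everything is routine Hilbert-space and spectral-measure manipulation.
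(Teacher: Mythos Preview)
The paper does not supply its own proof of this statement; it is quoted verbatim as Theorem~6.28 from Glasner's book and used as a black box. Your argument is precisely the standard spectral-disjointness proof one finds there: embed both $L^2$ spaces isometrically and equivariantly into $L^2(\eta)$ for an arbitrary joining $\eta$, reduce to orthogonality of zero-mean lifts, and deduce that orthogonality from mutual singularity of their spectral measures via the projection-valued spectral measure of $U_{T\times S}$. The steps are correct, including the Cauchy--Schwarz estimate $|\langle E(A)u,v\rangle|\le\sigma_u(A)^{1/2}\sigma_v(A)^{1/2}$, which shows the complex measure $\langle E(\cdot)u,v\rangle$ is absolutely continuous with respect to both $\sigma_u$ and $\sigma_v$ and hence vanishes when these are mutually singular.

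The caveat you flag is exactly right and worth keeping explicit: passing from ``zero mean'' to ``no spectral mass at $1$'' uses that the $U_T$-fixed vectors are the constants, i.e.\ ergodicity of $(X,T,\mu)$ (and likewise for $(Y,S,\nu)$). In the paper this is harmless, since the only use of the theorem is in Lemma~\ref{lem:LemPrzy} and Theorem~\ref{thm:disjoint-rotation}, where both systems are ergodic. If one wanted the statement without ergodicity, the decomposition should be $f=P_{\mathrm{inv}}f+(f-P_{\mathrm{inv}}f)$ rather than $f=\int f\,d\mu+f_0$, and one would then need a separate argument for the invariant pieces; but that refinement is not needed here.
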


The following lemma was suggested to us by Lema\'{n}czyk and Przytycki.

\begin{lem} \label{lem:LemPrzy}
Let $\alpha\in\R\setminus\Q$.
The measure-preserving systems $(\cir, R_\alpha, \lambda)$ and $(X,\mu,T)$ are disjoint if (and only if) $\exp(2\pi i k\alpha)\notin H(T,\mu)$ for $k\in\Z \setminus \{0\}$.
\end{lem}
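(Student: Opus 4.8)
The plan is to prove the two implications separately: the substantive ``if'' direction will follow by verifying the spectral-singularity hypothesis of Theorem~\ref{thm:Glasner-disjoint}, and the ``only if'' direction will be a short argument based on Lemma~\ref{lem:product_erg} together with the unique ergodicity of $R_\alpha$. We use throughout that, since $\lambda$ is the only $R_\alpha$-invariant measure, \emph{every} $T\times R_\alpha$-invariant probability measure on $X\times\cir$ has $\cir$-marginal equal to $\lambda$; in particular such a measure is a joining of $(X,T,\mu)$ with $(\cir,R_\alpha,\lambda)$ as soon as its $X$-marginal is $\mu$. In our applications $\mu$ is ergodic, and this is the case we write out.

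For the ``only if'' direction, suppose $e^{2\pi i k_0\alpha}\in H(T,\mu)$ for some $k_0\in\Z\setminus\{0\}$. Since $\alpha$ is irrational this eigenvalue differs from $1$, and it also belongs to $H(R_\alpha,\lambda)$, so by Lemma~\ref{lem:product_erg} the measure $\mu\times\lambda$ is not ergodic for $T\times R_\alpha$. Now consider its ergodic decomposition: each ergodic component is $T\times R_\alpha$-invariant, hence has $\cir$-marginal $\lambda$, and its $X$-marginal is a $T$-invariant measure; as these $X$-marginals average to the ergodic, hence extreme, measure $\mu$, each of them equals $\mu$. Thus every ergodic component of $\mu\times\lambda$ is a joining of $(X,T,\mu)$ and $(\cir,R_\alpha,\lambda)$, and as $\mu\times\lambda$ is not ergodic there are at least two distinct such components. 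Hence $\mu\times\lambda$ is not the only joining, and the two systems are not disjoint.

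For the ``if'' direction, assume $e^{2\pi i k\alpha}\notin H(T,\mu)$ for all $k\in\Z\setminus\{0\}$; we verify that the spectral types of $(\cir,R_\alpha,\lambda)$ and $(X,T,\mu)$ are mutually singular apart from a common atom at $1$, and then invoke Theorem~\ref{thm:Glasner-disjoint}. The family $\{t\mapsto e^{2\pi i kt}:k\in\Z\}$ is an orthonormal eigenbasis of $L^2(\cir,\lambda)$ for $U_{R_\alpha}$, with $t\mapsto e^{2\pi i kt}$ having eigenvalue $e^{2\pi i k\alpha}$; hence any $f=\sum_k c_k\,e^{2\pi i k\cdot}$ with all $c_k\neq 0$ spans a maximal cyclic subspace and its spectral measure is $\sum_k |c_k|^2\,\delta_{e^{2\pi i k\alpha}}$. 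So the spectral type of $R_\alpha$ is purely atomic, with atoms exactly at the points of $H(R_\alpha,\lambda)=\{e^{2\pi i k\alpha}:k\in\Z\}$, including the atom at $1$ (the case $k=0$). Writing $\sigma_T=\sigma_d+\sigma_c$ with $\sigma_c$ continuous and $\sigma_d$ atomic and supported on $H(T,\mu)$, the part $\sigma_c$ is singular with respect to the atomic spectral type of $R_\alpha$, while the atomic parts can share atoms only at points of $H(T,\mu)\cap\{e^{2\pi i k\alpha}:k\in\Z\}$, which by hypothesis, and because $e^{2\pi i k\alpha}\neq 1$ for $k\neq 0$, equals $\{1\}$. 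Hence the spectral types are singular except for the common atom at $1$, and Theorem~\ref{thm:Glasner-disjoint} gives disjointness.

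The only point requiring (mild) care is the claim that the spectral type of $R_\alpha$ genuinely carries an atom at $1$ — so that ``singular except for the common atom at $1$'' is exactly what holds — which is precisely what makes Theorem~\ref{thm:Glasner-disjoint}, rather than a naive ``singular spectra implies disjoint'' assertion, the right tool here; the eigenbasis computation above settles it. I expect no other obstacle: the ``if'' direction is bookkeeping around Theorem~\ref{thm:Glasner-disjoint}, and the ``only if'' direction is immediate from Lemma~\ref{lem:product_erg}.
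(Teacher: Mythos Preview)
Your ``if'' direction is exactly the paper's argument: the spectral type of $R_\alpha$ is purely atomic and the atomic part of $\sigma_T$ is supported on $H(T,\mu)$, so the hypothesis forces the spectral types to be singular except for the common atom at $1$, and Theorem~\ref{thm:Glasner-disjoint} applies; you simply spell out the eigenbasis computation that the paper takes for granted. You also supply the ``only if'' direction, which the paper states parenthetically but does not prove; your argument via Lemma~\ref{lem:product_erg} and the ergodic decomposition is correct under the ergodicity assumption on $\mu$ that you explicitly flag.
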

\begin{proof}

 It follows from the assumptions that $H(X,\mu,T)\cap H(\cir, R_\alpha, \lambda)=\{1\}$. 
 Since the spectral type of $R_\alpha$ is purely atomic,
and the purely atomic part of the spectral type of $T$ is supported on $H(X,T,\mu)$, 
the spectral types of $(X,T,\mu)$ and $(\cir, R_\alpha, \lambda)$
are singular (except the common atom at $1$). By Theorem \ref{thm:Glasner-disjoint}, this implies that $(\cir, R_\alpha, \lambda)$ and $(X,T,\mu)$ are disjoint.
\end{proof}

\begin{thm} \label{thm:disjoint-rotation}
For every ergodic measure-preserving system $(X,T,\mu)$ there exists an irrational $\alpha$ such that $(\cir, R_\alpha, \lambda)$ and $(X,\mu,T)$ are disjoint.
\end{thm}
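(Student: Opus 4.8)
The plan is to use Lemma~\ref{lem:LemPrzy}: it suffices to find an irrational $\alpha$ such that $\exp(2\pi i k\alpha)\notin H(T,\mu)$ for every $k\in\Z\setminus\{0\}$. Equivalently, writing $H=H(T,\mu)$ for the point spectrum of the ergodic system $(X,T,\mu)$, I need an irrational $\alpha$ for which the cyclic group $\{\exp(2\pi i k\alpha):k\in\Z\}$ meets $H$ only in $\{1\}$. Since $L^2(X,\mu)$ is separable, $H$ is a \emph{countable} subgroup of $\mathbb{S}^1$; this countability is the only property of $H$ that I will use.

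First I would reduce the problem to a counting/measure argument on the circle. For each fixed $k\in\Z\setminus\{0\}$ and each $\zeta\in H$, the condition $\exp(2\pi i k\alpha)=\zeta$ determines only finitely many values of $\alpha$ in $[0,1)$ (namely $|k|$ of them, the $k$-th roots of $\zeta$ rotated appropriately). Hence the set of ``bad'' $\alpha\in[0,1)$, i.e. those for which $\exp(2\pi i k\alpha)\in H$ for some $k\neq 0$, is a countable union (over $k\in\Z\setminus\{0\}$ and $\zeta\in H$) of finite sets, so it is at most countable. Since $[0,1)$ is uncountable and the rationals are also only countable, the set of irrational $\alpha$ avoiding all these conditions is nonempty (in fact has full Lebesgue measure and is co-countable). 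Pick any such $\alpha$.

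For such $\alpha$, recall that $H(\cir,R_\alpha,\lambda)=\{\exp(2\pi i k\alpha):k\in\Z\}$, so by construction $H(T,\mu)\cap H(\cir,R_\alpha,\lambda)=\{1\}$, and Lemma~\ref{lem:LemPrzy} immediately yields that $(\cir,R_\alpha,\lambda)$ and $(X,T,\mu)$ are disjoint. The only genuinely substantive input is the separability of $L^2(X,\mu)$, which forces $H(T,\mu)$ to be countable; everything else is a routine cardinality argument, and Lemma~\ref{lem:LemPrzy} (via Theorem~\ref{thm:Glasner-disjoint}) does the real work of converting the spectral disjointness into disjointness of systems. I do not expect a serious obstacle here; the one point to be careful about is confirming that $H(T,\mu)$ is literally countable (not merely ``small'' in some weaker sense) — this is standard, since distinct eigenvalues have orthogonal eigenvectors and an orthonormal family in a separable Hilbert space is at most countable.
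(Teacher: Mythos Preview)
Your proposal is correct and follows essentially the same approach as the paper: use countability of the point spectrum $H(T,\mu)$ to find an irrational $\alpha$ with $\exp(2\pi i k\alpha)\notin H(T,\mu)$ for all $k\neq 0$, then invoke Lemma~\ref{lem:LemPrzy}. The paper's version is simply more terse, asserting the existence of such $\alpha$ directly from countability without spelling out the cardinality argument you give.
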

\begin{proof}
The point spectrum of $(X,\mu,T)$ is a countable subgroup of the circle, hence there is an $\alpha\in\R\setminus\Q$ such that $\exp(2\pi i k\alpha)\notin H(T,\mu)$ for all $k\in\Z\setminus\{0\}$. It remains to apply Lemma \ref{lem:LemPrzy}.
%
\end{proof}

\section{Arcwise connectedness}
In this section we prove our main result. But first we prepare some notation and state an auxiliary lemma.

Let $Z$ be a topological space and $x,y\in Z$.
A \emph{path} (resp.\ \emph{arc}) from $x$ to $y$ in $Z$ is a continuous function (resp.\ homeomorphism onto the image) $\gamma\colon[0,1]\to Z$ such that $\gamma(0)=x$ and $\gamma(1)=y$.
The space $Z$ is \emph{pathwise connected} (\emph{arcwise connected}) if for every $x,y\in Z$ there is a path (an arc) from $x$ to $y$.

By $\dbar(A)$ we denote the \emph{upper asymptotic density} of a set $A\subset \Z$. Recall that
\[
\dbar(A)=\limsup_{n\to\infty}\frac{|A\cap\{1,\ldots,n\}|}{n}.
\]
Given $x,y\in \Omega=\Lambda^\Z$ with $x=(x_j)_{j\in\Z}$ and $y=(y_j)_{j\in\Z}$ the formula
\[
\dbarom(x,y)=\dbar\left(\{n\in\N\mid x_n\neq y_n\}\right).
\]
defines a pseudometric on $\Omega$. Note that $\dbarom(x,y)=0$ implies that $x$ and $y$ differ on a set of coordinates of zero upper asymptotic density. The $d$-bar pseudometric $\dbarom$ is closely connected with a metric on the set of shift-invariant measures $\Omega$ stronger than the usual metric determining the weak$^*$ topology. This metric, denoted by $\dbarm$ is defined for $\mu,\nu\in\Ms(\Omega)$ by
\[
\dbarm(\mu,\nu)=\inf_{\eta\in J(\mu,\nu)} \eta(\{(x,y)\in\Omega\times\Omega: x_0\neq y_0\}),
\]
where as in the previous section $J(\mu,\nu)$ denotes the set of all joinings of $\mu$ and $\nu$ (see \cite[Theorem I.9.7]{Shields}.
The link between the $d$-bar metric $\dbarm$ on measures and the pseudometric $\dbarom$ on generic points needed here is the following corollary of Lemma I.9.8 in \cite{Shields}:
\begin{equation}\label{ineq:Shields}
\dbarm(\bmu(x),\bmu(y))\leq\dbarom(x,y),\qquad x,y\in\Gen(\Omega,\sigma).
\end{equation}

For $x\in \Omega$ and $y\in\{0,1\}^\Z$ we denote by $x \mult y$ the coordinate-wise product: $(x \mult y)_j = x_j \cdot y_j$.
If $X\subset\Omega$ and $Y\subset\{0,1\}^\Z$ are shift spaces, then we denote the image of $X\times Y\subset\Omega$ through $\mult$ as $X\mult Y$. Observe that $0$ is a safe symbol for a shift space $X$ over $\{0,1,\ldots,n-1\}$ if and only if $X\mult \{0,1\}^\Z=X$. It is clear that $\mult\colon X\times Y\to X\mult Y$ is a factor map of $(X\times Y,\sigma\times\sigma)$ onto $(X\mult Y, \sigma)$. It follows that if $(x,y)\in\Gen(X\times Y,\sigma\times\sigma)$, 
then $x\mult y\in\Gen(X\mult Y,\sigma)$. 
Unfortunately, it is not always the case that if $x$ and $y$ are generic points, then $(x,y)$ is generic, see \cite[p. 22]{Furstenberg}.  But it holds if we assume that the corresponding measures are disjoint. The proof follows the same lines as the proof of Theorem I.6 in \cite{Furstenberg}.

\begin{lem}\label{lem:disjoint=>generic}
If  $(X,T,\mu)$ and $(Y,S,\nu)$ are disjoint measure preserving systems, $x\in\Gen_T(\mu)$ and $y\in\Gen_S(\nu)$ then $(x,y)\in\Gen_{T\times S}(\mu\times\nu)$.
\end{lem}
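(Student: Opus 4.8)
The plan is to reproduce Furstenberg's argument for Theorem I.6 in \cite{Furstenberg}: a soft compactness-plus-joinings proof. I would work with the empirical measures of the orbit of $(x,y)$ under $T\times S$,
\[
\rho_N=\frac1N\sum_{j=0}^{N-1}\delta_{(T^jx,\,S^jy)}\in\M(X\times Y),\qquad N\ge 1.
\]
Since $X\times Y$ is compact metric, $\M(X\times Y)$ is weak$^*$ compact and metrizable, so it is enough to prove that every weak$^*$ accumulation point $\rho$ of the sequence $(\rho_N)_{N\ge1}$ equals $\mu\times\nu$: a sequence in a compact metric space with a unique accumulation point converges to it, and this gives exactly $(x,y)\in\Gen_{T\times S}(\mu\times\nu)$.

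So I would fix a subsequence $N_k\to\infty$ with $\rho_{N_k}\to\rho$ and verify three things. First, $\rho$ is $T\times S$-invariant: for every continuous $\phi$ on $X\times Y$ the difference $\int\phi\circ(T\times S)\,d\rho_N-\int\phi\,d\rho_N$ equals $\tfrac1N\bigl(\phi(T^Nx,S^Ny)-\phi(x,y)\bigr)$, so its absolute value is at most $2\|\phi\|_\infty/N\to0$, and the invariance identity passes to the limit. Second, the marginals of $\rho$ are $\mu$ and $\nu$: denoting by $\pi_X\colon X\times Y\to X$ the coordinate projection, one has $(\pi_X)_*\rho_N=\tfrac1N\sum_{j=0}^{N-1}\delta_{T^jx}$, which converges weak$^*$ to $\mu$ precisely because $x\in\Gen_T(\mu)$; since pushforward by $\pi_X$ is weak$^*$-continuous, $(\pi_X)_*\rho=\mu$, and symmetrically $(\pi_Y)_*\rho=\nu$. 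Hence $\rho\in J(\mu,\nu)$. Third, and finally, the disjointness hypothesis says $J(\mu,\nu)=\{\mu\times\nu\}$, so $\rho=\mu\times\nu$.

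I do not expect a genuine obstacle here; the argument is entirely standard. The only points that need a little care are the identification of the coordinate marginals of $\rho_N$ with the empirical measures of $x$ and of $y$ (the single place where the genericity of each coordinate is used) and the elementary remark that uniqueness of the accumulation point upgrades to convergence of the whole sequence. Disjointness itself enters in exactly one line, to collapse the joining $\rho$ to the product measure $\mu\times\nu$.
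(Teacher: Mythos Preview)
Your proposal is correct and is precisely the argument the paper has in mind: the paper does not spell out a proof but simply states that it ``follows the same lines as the proof of Theorem I.6 in \cite{Furstenberg}'', which is exactly the compactness-plus-joinings argument you outline. There is nothing to add.
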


Finally, given a shifts space and $t\ge 0$ we write $\Mst(X)$ for the set of ergodic measures on $X$ with entropy less or equal than $t$, that is, $\Mst(X)=\{\mu\in\Mse(X):h(\mu)\le t\}$.
This was the last piece we needed for a proof of our main result.

\begin{thm}\label{thm:arcwise}
If $X$ is a shift space with a safe symbol (in particular, if $X$ is a hereditary shift)  and $t\ge 0$, then $\Mst(X)$ endowed with $\dbarm$ is arcwise connected.
\end{thm}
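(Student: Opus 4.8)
The plan is to join an arbitrary $\mu\in\Mst(X)$ to the fixed‑point mass $\delta_{\OO}$ by an arc lying in $\Mst(X)$. Since $\Mst(X)$, being a subset of the metric space $(\Mse(X),\dbarm)$, is metrizable, and a path‑connected metric space is arcwise connected, it suffices to produce, for every $\mu\in\Mst(X)$, a path from $\mu$ to $\delta_{\OO}$ inside $\Mst(X)$ (the construction will in fact yield arcs). After relabelling the alphabet we may assume the safe symbol is $0$, so $X\mult\{0,1\}^\Z=X$; then $\OO\in X$ and $h(\delta_{\OO})=0$, hence $\delta_{\OO}\in\Mst(X)$. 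Fix $\mu\in\Mst(X)$ with $\mu\neq\delta_{\OO}$ and choose $x\in\Gen_\sigma(\mu)$. By Theorem~\ref{thm:disjoint-rotation} and its proof there is an irrational $\alpha$ with $\exp(2\pi i k\alpha)\notin H(\sigma,\mu)$ for all $k\in\Z\setminus\{0\}$, so that $(\cir,R_\alpha,\lambda)$ and $(X,\sigma,\mu)$ are disjoint.

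The interpolation is carried out with rotation codings. For $s\in[0,1]$ let $\phi_s\colon\cir\to\{0,1\}^\Z$ be given by $\phi_s(t)_n=\charst_{[0,s)}(t+n\alpha\bmod 1)$, and put $y^{(s)}_n=\charst_{[0,s)}(n\alpha\bmod 1)$ and $\nu_s=(\phi_s)_*\lambda$. Since $\phi_s$ intertwines $R_\alpha$ with $\sigma$, the system $(\{0,1\}^\Z,\sigma,\nu_s)$ is a factor of $(\cir,R_\alpha,\lambda)$; hence $h(\nu_s)=0$ and its spectral type is purely atomic with atoms contained in $H(R_\alpha,\lambda)=\{\exp(2\pi i k\alpha):k\in\Z\}$. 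As that set meets $H(\sigma,\mu)$ only in $\{1\}$, the same argument as in Lemma~\ref{lem:LemPrzy} (via Theorem~\ref{thm:Glasner-disjoint}) shows $(\{0,1\}^\Z,\sigma,\nu_s)$ is disjoint from $(X,\sigma,\mu)$. The one delicate point is that $y^{(s)}\in\Gen_\sigma(\nu_s)$: the coding $\phi_s$ is not continuous, so genericity of $0\in\cir$ cannot simply be transported through it. Instead, for a cylinder function $g$ on $\{0,1\}^\Z$ one checks that $\tfrac1N\sum_{n<N}g(\sigma^n y^{(s)})$ equals the Weyl average $\tfrac1N\sum_{n<N}G(n\alpha\bmod 1)$ of the step function $G=g\circ\phi_s$ on $\cir$, and this converges to $\int_{\cir}G\,d\lambda=\int g\,d\nu_s$ by Weyl equidistribution for Riemann‑integrable functions; approximation then extends this to all continuous $g$. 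Note $y^{(0)}=\OO$ and $y^{(1)}=\mathbf 1$ (the all‑ones sequence), so $\nu_0=\delta_{\OO}$ and $\nu_1=\delta_{\mathbf 1}$.

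Now set $\mu_s:=\mult_*(\mu\times\nu_s)$, an invariant measure on $X\mult\{0,1\}^\Z=X$. Because $H(\sigma,\mu)\cap H(\sigma,\nu_s)\subseteq H(\sigma,\mu)\cap\{\exp(2\pi i k\alpha):k\in\Z\}=\{1\}$, Lemma~\ref{lem:product_erg} gives that $\mu\times\nu_s$ is ergodic, hence so is its factor $\mu_s$; moreover $h(\mu_s)\le h(\mu\times\nu_s)=h(\mu)+h(\nu_s)=h(\mu)\le t$, so $\mu_s\in\Mst(X)$. By the disjointness of $(X,\sigma,\mu)$ and $(\{0,1\}^\Z,\sigma,\nu_s)$ together with Lemma~\ref{lem:disjoint=>generic} we get $(x,y^{(s)})\in\Gen_{\sigma\times\sigma}(\mu\times\nu_s)$, and hence $x\mult y^{(s)}\in\Gen_\sigma(\mu_s)$. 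At the endpoints $\mu_0=\delta_{\OO}$ and $\mu_1=\mu$. Continuity is Lipschitz: by \eqref{ineq:Shields} and monotonicity of the upper density,
\[
\dbarm(\mu_s,\mu_{s'})\le\dbarom(x\mult y^{(s)},x\mult y^{(s')})\le\dbarom(y^{(s)},y^{(s')})=|s-s'|,
\]
the last equality by equidistribution of $(n\alpha)_n$. Thus $s\mapsto\mu_s$ is a path from $\delta_{\OO}$ to $\mu$ in $\Mst(X)$; since $s\mapsto\mu_s(\{z:z_0\neq0\})=\mu(\{z:z_0\neq0\})\cdot s$ is injective (as $\mu\neq\delta_{\OO}$), this path is a continuous injection of the compact interval into a Hausdorff space, hence an arc, and $s\mapsto x\mult y^{(s)}$ is the $\dbarom$‑continuous family of generic points promised in the introduction. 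For arbitrary $\mu_0,\mu_1\in\Mst(X)$ one concatenates the two arcs through $\delta_{\OO}$ into a path, and a path‑connected metric space is arcwise connected.

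The step I expect to be the main obstacle is precisely the genericity of the coded point $y^{(s)}$: because the coding map is discontinuous (and in fact the rotation orbit of $0$ meets its set of discontinuities), one cannot transport genericity for free and must fall back on the classical equidistribution of $(n\alpha)_n$ against Riemann‑integrable step functions. The remaining ingredients — that a factor of a discrete‑spectrum system disjoint from $X$ is again disjoint from $X$ (used both for the disjointness of $\mu$ from $\nu_s$ and, through Lemma~\ref{lem:product_erg}, for the ergodicity of $\mu\times\nu_s$), the entropy bound via the factor map $\mult$, and the $\dbarm$–Lipschitz estimate coming from \eqref{ineq:Shields} — are routine.
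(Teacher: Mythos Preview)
Your proof is correct and follows essentially the same route as the paper: choose an irrational $\alpha$ with $(X,\sigma,\mu)$ disjoint from $(\cir,R_\alpha,\lambda)$, use the Sturmian codings $y^{(s)}$ of $R_\alpha$ to interpolate, apply Lemma~\ref{lem:disjoint=>generic} to get $(x,y^{(s)})$ generic for $\mu\times\nu_s$, push through the factor map $\mult$, and read off the Lipschitz estimate from equidistribution and \eqref{ineq:Shields}. You are in fact more careful than the paper in two places---you justify genericity of $y^{(s)}$ via Weyl equidistribution for Riemann-integrable step functions (the paper merely asserts it as well known), and you verify injectivity of $s\mapsto\mu_s$ directly rather than appealing to the general pathwise-to-arcwise result---but these are refinements of the same argument, not a different one.
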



\begin{proof} Without loss of generality we assume that $0$ is a safe symbol for $X$. 
By \cite[Cor. 31.6]{Willard} every pathwise connected Hausdorff space is also arcwise connected. Therefore it suffices to show that for any $\mu \in \Mse(X)$ there exists a path from $\mu$ to the Dirac measure $\delta_\mathbf{0}$, where $\mathbf{0}$ denotes the bi-infinite sequence of $0$'s  That is, we need to define a continuous function $\hat\Phi\colon[0,1]\to\Mse(X)$ with  $\hat\Phi(0)=\delta_\mathbf{0}$ and $\hat{\Phi}(1)=\mu$ which is continuous when we endow $\Mse(X)$ with the $\dbarm$ metric. To do that we construct a $\dbarom$-continuous path of generic points.
Since the topology introduced by $\dbarom$ on $X$ is not Hausdorff 
we consider an equivalence relation on $X$ defined by $x\equiv y$ if $\dbarom(x,y)=0$. The resulting set of equivalence classes endowed with the metric induced by $\dbarom$ is called the \emph{Besicovitch space} of $X$ and denoted $X_B$. It is easy to see that a $\dbarom$ continuous path in $X$ leads to a path in $X_B$ and the arc in $X_B$ leads to a $\dbarom$ continuous arc in $X$. Hence it remains to define a $\dbarom$-continuous path of generic points of ergodic measures in $X$. 

For $\alpha,\beta \in [0,1]$, define the point $y_{\alpha,\beta} \in \Omega$ by
$$
	y_{\alpha, \beta} =  \left( \chi_{[0,\beta)}( j \alpha \bmod 1) \right)_{j \in \Z} \in  \Omega,
$$
where $\chi_{[0,\beta)}$ denotes the characteristic function of $[0,\beta)$. Note that $y_{\alpha,0}=\mathbf{0}$ and $y_{\alpha,1}=\mathbf{1}$, where $\mathbf{1}$ denotes the bi-infinite sequence of $1$'s. Write $Y_{\alpha,\beta}$ for the closure of the orbit of $y_{\alpha,\beta}$ with respect to $\sigma$.

It is a well known fact that if $\alpha$ is irrational and $0 < \beta < 1$, then  $(Y_{\alpha,\beta}, \sigma)$ is isomorphic (in the category of measure preserving systems) to the circle rotation $(\cir,R_\alpha)$. Furthermore it is minimal and has unique invariant measure, which we denote by $\nu_{\alpha,\beta}$. In addition, let $\nu_{\alpha,0}=\delta_\mathbf{0}$ be the Dirac measure concentrated on $\mathbf{0}$ and similarly $\nu_{\alpha,1}=\delta_\mathbf{1}$. With this notation $y_{\alpha,\beta}$ is generic for $\nu_{\alpha,\beta}$ for any $\beta\in[0,1]$.
Therefore, for $\beta\in(0,1)$, measure preserving systems $(Y,S,\nu)$ and $(Y_{\alpha,\beta}, \sigma, \nu_{\alpha,\beta})$ are disjoint if and only if $(Y,S,\nu)$ and $(\cir,R_\alpha,\lambda)$ are disjoint.

Fix the choice of an irrational $\alpha \in [0,1]$ such that $(X,\sigma,\mu)$ and $(\cir,R_\alpha,\lambda)$ are disjoint, whose existence is assured by Theorem \ref{thm:disjoint-rotation}. By Lemma \ref{lem:disjoint=>generic}, for any $\mu$-generic point $x \in X$ and any $\beta \in [0,1]$, the point $(x, y_{\alpha,\beta})$ is generic for the ergodic measure $\mu \times \nu_{\alpha,\beta}$. It follows that $x*y_{\alpha,\beta}$ is a generic point for some ergodic measure $\mu_\beta=\bmu(x*y_{\alpha,\beta})$ for $\beta\in[0,1]$.
Furthermore, $x*y_{\alpha,0}=\mathbf{0}$ and $x*y_{\alpha,1}=x$, hence $\mu_0=\delta_\mathbf{0}$ and $\mu_1=\mu$. Note that $h(\mu \times \nu_{\alpha,\beta})=h(\mu)$, thus $h(\mu_\beta)\le h(\mu)\le t$ for all $0\le \beta\le 1$.

For $0\le \beta<\beta'\le 1$ we have
$(y_{\alpha,\beta})_j\neq (y_{\alpha,\beta'})_j$ if and only if  $R^j_\alpha(0)\in [\beta,\beta')$.
Therefore
\[
\dbarom(y_{\alpha,\beta},y_{\alpha,\beta'})=\dbar(\{j\in\Z:(y_{\alpha,\beta})_j\neq (y_{\alpha,\beta'})_j\})=\dbar(\{j\in\Z: R^j_\alpha(0)\in [\beta,\beta')\}).
\]
Since $R_\alpha$ is a uniquely ergodic transformation, we have
\[
\dbar(\{j\in\Z: R^j_\alpha(0)\in [\beta,\beta')\})=\lambda([\beta,\beta'))=\beta'-\beta.
\]
It follows that the map
$ \Phi \colon[0,1]\ni\beta\mapsto x*y_{\alpha,\beta}\in\Omega$ 
is $\dbarom$-continuous, because
\[
\dbarom(x*y_{\alpha,\beta},x*y_{\alpha,\beta'})
\le \dbarom(y_{\alpha,\beta},y_{\alpha,\beta'})
=|\beta-\beta'|.
\]
Furthermore since $0$ is a safe symbol for $X$ we have $\Phi(X)\subset X$.
 Hence, by \eqref{ineq:Shields}, the map $\hat{\Phi}\colon[0,1]\ni \beta \mapsto \mu_\beta = \hat \mu \circ \Phi(\beta)\in\Mst(X)\subset \Mse(X)$
is also $\dbarm$-continuous, and establishes a path from $\hat\Phi(0)=\bmu(\mathbf{0}) = \delta_\mathbf{0}$ to $\hat\Phi(1)=\bmu(x) = \mu$ in $\Mst(X)$.
\end{proof}

By \cite[Theorem 7.7]{Rudolph} the topology of $\dbarm$ metric is stronger than the weak$^*$ topology on $\Mse(\Omega)$. Together with Theorem \ref{thm:arcwise} it yields.

\begin{cor}\label{cor:weak-star-arc-connectedness}
If $X$ is a shift space with a safe symbol (in particular, if $X$ is a hereditary shift), then for any $t\ge 0$ the set $\Mst(X)$ (in particular, $\Mse(X)$) is arcwise connected in the weak$^*$ topology.
\end{cor}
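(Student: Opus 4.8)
The plan is to derive this corollary immediately from Theorem~\ref{thm:arcwise} and the comparison of topologies stated just above. By \cite[Theorem 7.7]{Rudolph} the topology induced by $\dbarm$ on $\Mse(\Omega)$ is finer than the \weakst{} topology, so the identity map from $(\Mst(X),\dbarm)$ to $(\Mst(X),\weakst)$ is continuous. Fix $\mu,\nu\in\Mst(X)$. By Theorem~\ref{thm:arcwise} there is a $\dbarm$-arc $\gamma\colon[0,1]\to\Mst(X)$ with $\gamma(0)=\mu$ and $\gamma(1)=\nu$; in particular $\gamma$ is injective and $\dbarm$-continuous, hence also \weakst-continuous by the previous remark. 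So at the very least $(\Mst(X),\weakst)$ is pathwise connected.

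The one point that needs care is passing from a \weakst-continuous path to a \weakst{} \emph{arc}, since a continuous bijection need not be a homeomorphism in general. I would resolve this in the standard way: a continuous bijection from a compact space onto a Hausdorff space is automatically a homeomorphism. Since $[0,1]$ is compact and $(\Mse(X),\weakst)$ is metrizable (hence Hausdorff), the map $\gamma\colon[0,1]\to\gamma([0,1])$ is a homeomorphism onto its image once the image carries the \weakst{} topology, i.e.\ $\gamma$ is a \weakst-arc from $\mu$ to $\nu$. Equivalently, one may simply note that $(\Mst(X),\weakst)$ is pathwise connected and Hausdorff and invoke \cite[Cor. 31.6]{Willard}, exactly as in the proof of Theorem~\ref{thm:arcwise}. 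This is the only genuine (and very mild) obstacle.

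Finally, the parenthetical assertion about $\Mse(X)$ is immediate: every $\mu\in\Mse(X)$ satisfies $h(\mu)\le\htop(X)$, so $\Mse(X)=\Mst(X)$ for every $t\ge\htop(X)$ and the claim follows from the case already treated; alternatively, given two ergodic measures one takes $t$ larger than both of their entropies and applies the first part, obtaining an arc between them inside $\Mst(X)\subseteq\Mse(X)$.
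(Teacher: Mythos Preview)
Your proof is correct and follows the same approach as the paper, which simply records that the $\dbarm$ topology refines the weak$^*$ topology and invokes Theorem~\ref{thm:arcwise}. You are in fact more careful than the paper about the passage from a $\dbarm$-arc to a weak$^*$-arc; your compact-to-Hausdorff argument (or the appeal to \cite[Cor.~31.6]{Willard}) fills in a step the paper leaves implicit.
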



As the entropy function on $\Mse(\Omega)$ endowed with the $\dbarm$ metric is continuous (see \cite[Theorem 7.9]{Rudolph} or \cite[Theorem I.9.16]{Shields}) we conclude also the following

\begin{cor}\label{cor:entropy-interval}
If $X$ is a shift space with a safe symbol (in particular, if $X$ is a hereditary shift),
then $\{h(\mu):\mu\in\Mse(X)\}=[0,h_{\text{top}}(X)]$ (possibly degenerate to a point).
\end{cor}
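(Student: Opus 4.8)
The plan is to deduce Corollary~\ref{cor:entropy-interval} from Theorem~\ref{thm:arcwise} together with the $\dbarm$-continuity of the entropy function. First I would recall that $h\colon\Mse(\Omega)\to[0,\infty)$ restricted to the $\dbarm$ metric is continuous (by \cite[Theorem 7.9]{Rudolph} or \cite[Theorem I.9.16]{Shields}), and that by Theorem~\ref{thm:arcwise} the space $\Mse(X)$ endowed with $\dbarm$ is arcwise connected; in particular it is connected. A continuous real-valued function on a connected space has connected image, so $H:=\{h(\mu):\mu\in\Mse(X)\}$ is a subinterval of $[0,\infty)$. It remains to identify its endpoints.

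Next I would pin down the lower endpoint. The trivial shift-invariant Dirac measure $\delta_{\mathbf 0}$ lies in $\Mse(X)$ (since $\mathbf 0\in X$, as $0$ is a safe symbol, and $\delta_{\mathbf 0}$ is ergodic) and has $h(\delta_{\mathbf 0})=0$, so $0\in H$. For the upper endpoint, recall the variational principle: $h_{\text{top}}(X)=\sup\{h(\mu):\mu\in\MT(X)\}$, and since the entropy function on $\MT(X)$ is affine and upper semicontinuous while $\MT(X)$ is a Choquet simplex whose extreme points are exactly the ergodic measures, the supremum over $\MT(X)$ equals the supremum over $\Mse(X)$; hence $\sup H=h_{\text{top}}(X)$. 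Combining, $H$ is an interval containing $0$ with supremum $h_{\text{top}}(X)$.

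Finally I would check that $H$ is actually the closed interval $[0,h_{\text{top}}(X)]$, i.e.\ that the supremum is attained. If $h_{\text{top}}(X)=0$ there is nothing to prove: $H=\{0\}$ (the degenerate case). If $h_{\text{top}}(X)>0$, one argues that $h$ attains its maximum on $\Mse(X)$. This follows because the maximum of the affine u.s.c.\ function $h$ over the compact (weak$^*$) simplex $\MT(X)$ is attained, say at some $\mu_{\max}$, and by the ergodic decomposition and affineness of $h$ one of the ergodic components of $\mu_{\max}$ must already have entropy $h_{\text{top}}(X)$; that ergodic component lies in $\Mse(X)$. Hence $h_{\text{top}}(X)\in H$, so $H=[0,h_{\text{top}}(X)]$.

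The main obstacle here is not deep but bookkeeping: one must be careful that the closed endpoint $h_{\text{top}}(X)$ is genuinely realized by an \emph{ergodic} measure, since arcwise connectedness alone only gives an interval that might a priori be half-open on the right. This is handled by the ergodic-decomposition argument above (existence of a measure of maximal entropy among ergodic measures), which uses that entropy is affine under ergodic decomposition rather than anything about the $\dbarm$ metric. Everything else is a routine application of "continuous image of a connected set is connected" plus the variational principle.
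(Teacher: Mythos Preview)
Your proof is correct and follows the same route as the paper: deduce the result from Theorem~\ref{thm:arcwise} together with the $\dbarm$-continuity of the entropy function (the paper's entire argument is the one-line remark preceding the corollary, citing \cite[Theorem~7.9]{Rudolph} and \cite[Theorem~I.9.16]{Shields}). Your additional bookkeeping on the upper endpoint---using upper semicontinuity of $h$ on the weak$^*$-compact simplex $\MT(X)$ and the ergodic decomposition to produce an ergodic measure of maximal entropy---fills in a detail the paper takes for granted, and is exactly the right justification.
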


If $X$ is a hereditary shift, then the bi-infinite sequence of $0$'s denoted by $\mathbf{0}$ is a fixed point for the shift map and belongs to $X$. Hence the atomic measure $\delta_\mathbf{0}$ carried by $\mathbf{0}$ is invariant for $X$. There are hereditary shifts for which $\delta_\mathbf{0}$ is the only invariant measure and the existence of another invariant measure has many consequences (see \cite{hh4} for more details). Thus we divide all hereditary shifts into two disjoint classes:
\begin{enumerate}[I.]
  \item \emph{uniquely ergodic} hereditary shifts  ($\delta_\mathbf{0}$ is the unique invariant measure), \label{class:I}
  \item \emph{non-uniquely ergodic} hereditary shifts. \label{class:II}
\end{enumerate}
Hereditary shift spaces in class \eqref{class:I} are characterized as those in which for every point $x\in X$ the symbols other than $0$ appear in $x$ on a set of coordinates of zero asymptotic density. Class \eqref{class:I} coincides also with the hereditary shifts having zero topological entropy.
Although this is not stated explicitly in \cite{hh4} the proof applies verbatim to shift spaces with a safe symbol. Hence we may note the following corollary.

\begin{cor}\label{cor:entropy-positive}
A shift space with a safe symbol (in particular, a hereditary shift) has positive topological entropy if and only if it has uncountably many ergodic measures.
\end{cor}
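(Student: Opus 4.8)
The plan is to prove both implications directly using the corollaries established above. For the forward direction, suppose $X$ has positive topological entropy. By Corollary~\ref{cor:entropy-interval} we have $\{h(\mu):\mu\in\Mse(X)\}=[0,h_{\text{top}}(X)]$, which is a nondegenerate interval. In particular there are ergodic measures of arbitrarily small positive entropy, hence at least two distinct ergodic measures. In fact, since the entropy function $h$ takes every value in $[0,h_{\text{top}}(X)]$ and this set is uncountable, $\Mse(X)$ must itself be uncountable (a countable set cannot surject onto an uncountable one). Alternatively, and giving a cleaner statement, one invokes Corollary~\ref{cor:weak-star-arc-connectedness}: $\Mse(X)$ is arcwise connected in the weak$^*$ topology and contains at least two points (e.g.\ $\delta_\mathbf{0}$ and an ergodic measure of positive entropy), so it contains an arc, which is a homeomorphic copy of $[0,1]$ and hence uncountable.

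For the converse, suppose $X$ has zero topological entropy. Then $h(\mu)=0$ for every $\mu\in\MT(X)$, so every ergodic measure is a zero-entropy measure. The point is to show that a shift space with a safe symbol and zero topological entropy is uniquely ergodic. Assuming WLOG that $0$ is the safe symbol, we use the characterization recalled in the discussion preceding this corollary: for a hereditary shift (and, as noted, the argument from \cite{hh4} applies verbatim to shifts with a safe symbol), zero topological entropy is equivalent to the property that for every $x\in X$ the non-zero symbols occur on a set of coordinates of zero upper asymptotic density, which in turn is equivalent to $\delta_\mathbf{0}$ being the unique invariant measure. Thus $\MT(X)=\{\delta_\mathbf{0}\}$ and a fortiori $\Mse(X)=\{\delta_\mathbf{0}\}$ is a singleton, hence certainly not uncountable. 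Combining the two implications gives the equivalence, and in fact shows that the set of ergodic measures of a shift with a safe symbol is either a singleton or uncountable, with the singleton case occurring exactly when $h_{\text{top}}(X)=0$.

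The only delicate point is the converse direction, where one must be sure that the equivalence ``zero entropy $\iff$ unique ergodicity'' for hereditary shifts genuinely transfers to the safe-symbol setting; this is precisely the claim that the proof in \cite{hh4} applies verbatim, and the decomposition into classes~\eqref{class:I} and \eqref{class:II} above records exactly this. Everything else is immediate from the already-proved Corollaries \ref{cor:weak-star-arc-connectedness} and \ref{cor:entropy-interval}, so no further calculation is needed.
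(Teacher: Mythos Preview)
Your proposal is correct and matches the paper's intended argument. The paper does not spell out a proof of this corollary; it simply records it as a consequence of the preceding paragraph, which establishes (via \cite{hh4}, extended verbatim to the safe-symbol case) that class~\eqref{class:I} coincides with zero topological entropy, together with the earlier corollaries. Your two implications---using Corollary~\ref{cor:entropy-interval} (or equivalently Corollary~\ref{cor:weak-star-arc-connectedness}) for ``positive entropy $\Rightarrow$ uncountably many ergodic measures,'' and the \cite{hh4} dichotomy for the converse---are exactly this reasoning made explicit.
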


\section{Examples of hereditary shifts}\label{sex:examples}

Here we list some notable examples of hereditary shifts to which our main result can be applied.

The primary example of a $\mathscr{B}$-free shift is  the \emph{square-free shift} considered by Sarnak \cite{Sarnak}; that is a shift space, whose structure reflects the statistical properties of square-free numbers. Recall that $n\in\N$ is \emph{square-free} if there is no prime number $p$ such that $p^2$ divides $n$. Let $\eta$ be the characteristic function of the square-free numbers treated as point in $\{0,1\}^\Z$. The \emph{square-free shift} is the closure of the orbit of $\eta$ with respect to the shift map and it turns out it is hereditary \cite{Peckner,Sarnak}. The study of the square-free shift has been recently extended \cite{ALR,BKKPL} to the general $\mathscr{B}$-free shifts $X_\bb$ induced in the same manner by the characteristic function $\bb$ of $\mathscr{B}$-free numbers, that is, integers with no factor in a given
set $\mathscr{B}\subset\N$. If $\mathscr{B}$ is an Erd\H{o}s set, that is, it consists of pairwise co-prime integers and $\sum_{b\in\mathscr{B}}1/b<\infty$, then the $\mathscr{B}$-free shift is also hereditary, see \cite{ALR}. In general this is not the case, but the smallest hereditary shift containing $X_\bb$ still has some interesting properties (see \cite{BKKPL}). These systems were also investigated by Avdeeva \cite{Avdeeva}, Cellarosi and Sinai \cite{CS}, Ku{\l}aga-Przymus,
Lema\'{n}czyk, and Weiss \cite{KPLW,KPLW2}, Peckner \cite{Peckner}. 

Another shift space related to $\BB$-free integers is the $\BB$-admissible shift \cite{BKKPL}. We say that a sequence $x=(x_j)_{j\in\Z}\in \{0,1\}^\Z$ is  \emph{$\BB$-admissible} if for every $b\in\BB$ the set $\{ j\in\Z: x_j=1\}$ is disjoint with a set $b\Z+r$ for some $0\le r<b$. It is not hard to see that the set of $\BB$-admissible sequences in $\Omega$ is a hereditary shift space $X_\BB$ called the \emph{$\BB$-admissible shift}.
Because $\bb$ is clearly a $\BB$-admissible sequence we see immediately that $X_\bb\subset X_\BB$ and the equality holds if $\BB$ is an Erd\H{o}s set.

Beta shifts introduced by R\'{e}nyi \cite{R} are related to number theory, tilings, and dynamics
of discontinuous transformations. For $\beta>1$ the \emph{beta shift} $\Omega_{\beta}$ is the closure of
the set of sequences in $\{0, 1 ,\ldots, \lfloor\beta\rfloor\}^\mathbb{N}$ arising as greedy $\beta$-expansions of numbers from $[0,1]$.
All beta shifts are hereditary (see \cite{hh4}).

Spacing shifts were introduced by Lau and Zame in \cite{LZ}. A \emph{spacing shift} $\Omega_P$,
where $P\subset\mathbb{N}$, is
the set of all $x=(x_i)\in\{0,1\}^\Z$ such
that $x_i=x_j=1$ and $i\neq j$ imply $|i-j|\in P$.
Spacing shifts were studied in \cite{B,spacing}. It is easy to see that they are hereditary.

Bounded density shifts were recently introduced by Stanley \cite{Stanley}.
They are defined by fixing a function $f\colon\N\to[0,\infty)$
and considering the set of all biinfinite sequences such that for each $p\in\N$ the sum of the entries of any finite subword of length $p$ do not exceed $f(p)$. Since for any word coordinatewise smaller than a given word the sum of the entries can only decrease, these shifts are hereditary.

\section{Intermediate entropy property vs arcwise connectedness}

The following results show that the conclusions of Corollary \ref{cor:weak-star-arc-connectedness} and Corollary \ref{cor:entropy-interval} are independent of each other\footnote{We are grateful to Tomasz Downarowicz for drawing our attention to the theory presented in \cite{D-book} and generously sharing his insight on these matters.}.
Recall that for every dynamical system $(X,T)$ the set $\MT(X)$ endowed with the weak$^*$ topology has the structure of a Choquet simplex (see \cite{D-book}). We say that a nonempty metrizable convex compact subset $K$ of a locally convex topological vector space is a \emph{Choquet simplex} if every point of $K$ is the barycenter of a unique probability measure supported on the set $\ext K$ of extreme points of $K$.

\begin{lem}\label{lem:convex}
Let $K$ be a convex subset of a vector space (over $\R$ or $\C$).
If $P\subset\ext K$ is closed, then its characteristic function $\chi_P$ is convex, that is, for $x,y\in K$ and $0<\alpha<1$ we have
$\chi_P(\alpha x+(1-\alpha)y)\le \alpha\chi_P(x)+(1-\alpha)\chi_P(y)$.
\end{lem}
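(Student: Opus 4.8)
The plan is to unwind the definition of $\chi_P$ and check the inequality case-by-case, using the fact that $P$ consists of extreme points. Recall that for a subset $P$ of $K$, the function $\chi_P$ takes the value $1$ on $P$ and $0$ on $K\setminus P$; so the inequality $\chi_P(\alpha x+(1-\alpha)y)\le \alpha\chi_P(x)+(1-\alpha)\chi_P(y)$ can only fail if the left-hand side equals $1$ while the right-hand side is strictly less than $1$, i.e. when $\alpha x+(1-\alpha)y\in P$ but $\{x,y\}\not\subset P$. So the whole content of the lemma is: \emph{if a proper convex combination $\alpha x+(1-\alpha)y$ of points of $K$ lies in $P$, then both $x$ and $y$ lie in $P$.}

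First I would reduce to the case $x,y\in K$. Since $z:=\alpha x+(1-\alpha)y\in P\subset \ext K$ and $z$ is written as a proper convex combination ($0<\alpha<1$) of two points of the convex set $K$, the definition of an extreme point forces $x=y=z$. In particular $x=z\in P$ and $y=z\in P$, so $\chi_P(x)=\chi_P(y)=1$ and the right-hand side equals $1$, while the left-hand side is at most $1$; the inequality holds. (Note this argument does not even use that $P$ is closed — closedness is presumably only needed downstream, e.g. to ensure $\chi_P$ is also upper semicontinuous or measurable when $P$ is later taken to be a closed set of ergodic measures, so I would not dwell on it here; if the authors want it invoked, one can simply remark that closedness of $P$ is recorded for later use.)

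The only place where one must be slightly careful is the trivial bookkeeping of the remaining cases, so that the stated inequality is verified for \emph{all} $x,y\in K$ and all $\alpha\in(0,1)$, not just when the left side is $1$. If $\alpha x+(1-\alpha)y\notin P$, then $\chi_P(\alpha x+(1-\alpha)y)=0\le \alpha\chi_P(x)+(1-\alpha)\chi_P(y)$ since the right-hand side is a convex combination of values in $\{0,1\}$ and hence nonnegative. Combined with the previous paragraph this exhausts all cases.

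I do not expect any real obstacle: the lemma is essentially the definition of an extreme point, packaged as a convexity statement about the indicator function. The only mild subtlety worth flagging to the reader is that "convex" here is being used in the sense of \emph{convex function} (Jensen-type inequality), and that the hypothesis $P\subset\ext K$ is exactly what makes the "dangerous" case vacuous.
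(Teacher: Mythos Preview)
Your argument is correct. The paper actually states this lemma without proof, treating it as an elementary observation, and your reasoning is exactly the natural one: the inequality can only fail when $\alpha x+(1-\alpha)y\in P$ while at least one of $x,y$ lies outside $P$, but since $P\subset\ext K$ and $0<\alpha<1$, the definition of extreme point forces $x=y=\alpha x+(1-\alpha)y\in P$, ruling this out. Your observation that the closedness of $P$ is not used for the convexity inequality itself is also correct; in the paper it is recorded so that $\chi_P$ is in addition upper semicontinuous, which is invoked in the proof of Theorem~\ref{thm:DS}. One tiny wording quibble: the phrase ``reduce to the case $x,y\in K$'' is superfluous, since $x,y\in K$ is already part of the hypothesis.
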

\begin{thm}\label{thm:DS}
For every Polish topological space $P$ there exists a minimal shift space $X$ (a Toeplitz shift) such that $\Mse(X)$ with the weak$^*$ topology and $P$ are
 homeomorphic  and there is a unique measure $\mu\in\Mse(X)$ with positive metric entropy. (In particular, if $P$ has more than one point, then 
 $X$ does not have the intermediate entropy property.)
\end{thm}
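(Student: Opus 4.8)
The construction should combine two known inputs. First, by a theorem of Downarowicz and Serafin \cite{DS} (or the more refined machinery of \cite{D-book}), any prescribed upper semicontinuous affine-type data on a Choquet simplex can be realized as the entropy function of a minimal system; second, by the realization results for Choquet simplices of invariant measures (Downarowicz \cite{D}, Haydn \cite{H}), any Polish space $P$ arises as the set of extreme points of the simplex of invariant measures of some (Toeplitz) minimal shift. The idea is to pick a point $p_0\in P$, declare the corresponding extreme measure to have some fixed entropy $c>0$, and declare every other extreme measure to have entropy $0$, and then check that this entropy function is admissible for the Downarowicz--Serafin realization theorem, so that it is attained by a minimal Toeplitz shift $X$ with $\Mse(X)\cong P$.

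\textbf{Key steps.} First I would recall the precise statement of the realization theorem I intend to invoke: it says that if $S$ is a metrizable Choquet simplex and $H\colon\ext S\to[0,\infty)$ is a bounded function such that its ``harmonic extension'' (or: the function $\tilde H$ on $S$ obtained by integrating $H$ against the barycenter measure) is upper semicontinuous, then there is a minimal (Toeplitz) subshift $X$ with $\MT(X)$ affinely homeomorphic to $S$ and entropy function equal to $H$ on the extreme points. Next, given the Polish space $P$, I would use \cite{D,H} to find a Choquet simplex $S$ whose extreme boundary $\ext S$ is homeomorphic to $P$. Then I would fix $p_0\in P$, identify it with an extreme point $e_0\in\ext S$, and define $H(e_0)=c$ for a suitable constant $c\in(0,\infty)$ and $H(e)=0$ for all $e\in\ext S\setminus\{e_0\}$. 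The crucial verification is that $\tilde H$ is upper semicontinuous on $S$: this is exactly where Lemma \ref{lem:convex} enters. Since $\{e_0\}$ is a closed subset of $\ext S$, Lemma \ref{lem:convex} tells us $\chi_{\{e_0\}}$ is convex on $S$; but $\tilde H$ is (up to the scalar $c$) the affine function on $S$ whose restriction to $\ext S$ is $\chi_{\{e_0\}}$, so its value at a barycenter $x=\int e\,d\mu(e)$ is $c\,\mu(\{e_0\})$, and we must see this is upper semicontinuous in $x$. One shows $\{x: \mu_x(\{e_0\})\ge t\}$ is closed using that $\{e_0\}$ is a closed face and the barycenter map is continuous; alternatively one checks directly that $\tilde H = c\cdot(\text{affine u.s.c.\ hull of }\chi_{\{e_0\}})$ and invokes that the upper envelope of a convex u.s.c.\ function on a simplex is affine and u.s.c. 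Having verified admissibility, I would feed $(S,H)$ into the Downarowicz--Serafin theorem to get the Toeplitz shift $X$; then $\Mse(X) = \ext\MT(X) \cong \ext S \cong P$ in the weak$^*$ topology, and the entropy function equals $H$, so the only measure of positive entropy is the one corresponding to $e_0$. Finally, if $|P|\ge 2$ then the entropy function takes only the two values $0$ and $c>0$ on ergodic measures, so $c$ is an isolated value and the intermediate entropy property fails.

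\textbf{Main obstacle.} The technical heart is the upper semicontinuity check for $\tilde H$, i.e.\ confirming that the particular entropy data $c\cdot\chi_{\{e_0\}}$ satisfies the hypotheses of whichever version of the Downarowicz--Serafin realization theorem one cites; this is precisely what Lemma \ref{lem:convex} is designed to handle, since it identifies $\chi_{\{e_0\}}$ (and more generally $\chi_P$ for $P\subset\ext K$ closed) as a convex function, and the affine u.s.c.\ hull of a convex u.s.c.\ function on a Choquet simplex is again affine and u.s.c., which is what the realization theorem needs for $H$ to be the entropy function of a system. A secondary point to get right is bookkeeping: making sure the realization theorem one uses indeed yields a \emph{minimal} (Toeplitz) model with the simplex realized \emph{exactly} (not merely as a factor), and that the homeomorphism of $\ext\MT(X)$ with $P$ in the abstract simplex is compatible with the weak$^*$ topology on $\Mse(X)$ — both of these are contained in \cite{DS,D,H} but should be cited carefully. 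One could also phrase the whole construction without separating $P$-realization from entropy-realization if \cite{DS} already packages both, in which case only the u.s.c.\ verification remains.
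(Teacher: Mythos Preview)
Your proposal is correct and follows essentially the same route as the paper's proof: realize $P$ as $\ext K_P$ via Haydon \cite{H}, fix an extreme point $z$, take the harmonic prolongation $\varphi$ of $\chi_{\{z\}}$, use Lemma \ref{lem:convex} together with the fact that $\{z\}$ is closed to get that $\chi_{\{z\}}$ is convex and u.s.c., invoke \cite[Fact A.2.10]{D-book} to conclude $\varphi$ is affine and u.s.c., and then apply \cite[Theorem 1]{DS} to obtain the Toeplitz shift. The only cosmetic difference is that the paper takes $c=1$ while you leave $c>0$ arbitrary, and the paper cites the specific fact from \cite{D-book} for the u.s.c.\ step rather than arguing it by hand as you sketch.
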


\begin{proof}
Let $K_P$ be the Choquet simplex whose set of extreme points $\ext K_P$ is homeomorphic to $P$ (such a Choquet simplex exists by \cite{H}). 
Fix an extreme point $z \in \ext K_P$. Let $\delta_z$ denote the Dirac measure concentrated on $\{z\}$ and let $\chi_{\{z\}}$ denote the characteristic function of $\{z\}\subset\ext K_P$. Given a point $x\in K_P$, we denote by $\xi^x$ the unique probability measure concentrated on $\ext K_P$ such that $x$ is the barycenter of $\xi^x$, that is, $x$ is equal to the Pettis integral of the identity with respect to $\xi^x$:
\[
x=\int_{K_P}y\,d\,\xi^x(y).
\]
(See Appendix in \cite{D-book} for more details.) We define a function $\varphi \colon K_P\to \R$ by
\[
	\varphi(x)=\int_{\ext K_P} \chi_{\{z\}}(\lambda)\xi^x(d\lambda)=\xi^x(\{z\})\,.
\]	
This function is the harmonic prolongation of $\chi_{\{z\}}$ \cite[Definition A.2.18]{D-book}.
The characteristic function of a closed set is upper semi-continuous. Furthermore it is convex by Lemma \ref{lem:convex}. 
By \cite[Fact A.2.10]{D-book} every upper semi-continuous harmonic function on a Choquet simplex is affine.
Therefore, $\varphi$ is bounded, affine, non-negative, upper semi-continuous on $K_P$, and $\varphi|_{\ext K_P}=\chi_{\{z\}}$.
\newcommand{\psitmp}{\psi}

By~\cite[Theorem 1]{DS}, there exists a minimal Toeplitz shift $(X,T)$ and an affine
(onto) homeomorphism $\psitmp\colon K_P \to \MT(X)$, such that for every $x \in K_P$,
$\varphi(x) =h(\psitmp(x))$,  where $h$ denotes the entropy function.
This proves the proposition with $\mu = \varphi(z)$.
\end{proof}
\begin{thm}\label{thm:DS2}
For every uncountable Polish topological space $P$ there exists a minimal shift space $X$ (a Toeplitz shift) such that $\Mse(X)$ with the weak$^*$ topology and $P$ are 
 homeomorphic  and the metric entropy function $h$ restricted to $\Mse(X)$ is not constant and has the Darboux (intermediate value) property.
\end{thm}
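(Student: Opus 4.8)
The plan is to mimic the proof of Theorem~\ref{thm:DS}. On the metrizable Choquet simplex $K_P$ whose set of extreme points $\ext K_P$ is homeomorphic to $P$ (such a simplex exists by \cite{H}) I would construct a bounded, non-negative, affine, upper semi-continuous function $\varphi$ whose restriction to $\ext K_P$ is non-constant with image a non-degenerate interval, and then feed $\varphi$ to \cite[Theorem~1]{DS} to obtain a minimal Toeplitz shift $(X,T)$ together with an affine onto homeomorphism $\psi\colon K_P\to\MT(X)$ such that $h(\psi(x))=\varphi(x)$ for all $x\in K_P$. Everything else then follows as in Theorem~\ref{thm:DS}; the only new ingredient is the choice of $\varphi$.

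The building blocks are harmonic prolongations of indicator functions of closed sets, exactly as in Theorem~\ref{thm:DS}. Since $P$ is uncountable and Polish, it contains a homeomorphic copy $C$ of the Cantor set $\{0,1\}^{\N}$; transported into $\ext K_P$, the set $C$ is compact, hence closed in $K_P$. For $n\ge 1$ put $F_n=\{\omega\in C:\omega_n=1\}$ under the identification $C\cong\{0,1\}^{\N}$; each $F_n$ is clopen in $C$, hence closed in $K_P$, so by Lemma~\ref{lem:convex} its indicator $\chi_{F_n}$ is convex on $K_P$ and, being the indicator of a closed set, upper semi-continuous. As in the proof of Theorem~\ref{thm:DS}, the harmonic prolongation $\varphi_n(x)=\xi^x(F_n)$ is then affine (by \cite[Fact~A.2.10]{D-book}), non-negative, bounded by $1$, and $\varphi_n|_{\ext K_P}=\chi_{F_n}$.

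Now set $\varphi=\sum_{n\ge 1}2^{-n}\varphi_n$. The series converges uniformly, so $\varphi$ is affine (a uniform, even pointwise, limit of affine functions is affine) and upper semi-continuous (given $\delta>0$ choose $N$ with $\sup_{K_P}|\varphi-\sum_{n\le N}2^{-n}\varphi_n|<\delta$ and apply upper semi-continuity of the finite partial sum); clearly $0\le\varphi\le 1$. For $x\in\ext K_P$ one has $\varphi(x)=\sum_{n\ge 1}2^{-n}\chi_{F_n}(x)$, which on $C\cong\{0,1\}^{\N}$ is the dyadic expansion map $\omega\mapsto\sum_{n\ge 1}\omega_n2^{-n}$, a surjection onto $[0,1]$, while $\varphi\equiv 0$ on $\ext K_P\setminus C$. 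Hence $\varphi(\ext K_P)=[0,1]$, so $\varphi|_{\ext K_P}$ is non-constant.

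Finally I would apply \cite[Theorem~1]{DS} to $\varphi$, producing $(X,T)$ and $\psi$ as above. An affine homeomorphism maps extreme points bijectively onto extreme points, so $\psi$ restricts to a homeomorphism of $\ext K_P$ onto the set of extreme points of $\MT(X)$, which is $\Mse(X)$; thus $\Mse(X)$ is homeomorphic to $\ext K_P\cong P$. Transporting $\varphi$ through $\psi$, the entropy function on $\Mse(X)$ has image $\varphi(\ext K_P)=[0,1]$, hence it is non-constant and attains every intermediate value, i.e.\ has the Darboux property. (Since an affine upper semi-continuous function on a compact convex set attains its maximum at an extreme point, $\htop(X)=\max_{\MT(X)}h=\max_{\ext K_P}\varphi=1$, so in fact $h(\Mse(X))=[0,\htop(X)]$.) The point requiring care — and the only real difference from Theorem~\ref{thm:DS} — is keeping the building blocks upper semi-continuous on all of $K_P$ and not merely on $\ext K_P$ (which need not be closed when $P$ is non-compact); this is precisely why the $F_n$ are placed inside a compact Cantor subset of $\ext K_P$, and why the interval image is produced through the dyadic expansion rather than a single indicator.
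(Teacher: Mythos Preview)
Your proof is correct and follows essentially the same strategy as the paper's: locate a Cantor set $C$ inside $P$, define on $\ext K_P$ an upper semi-continuous function that vanishes off $C$ and maps $C$ onto $[0,1]$, pass to its harmonic prolongation on $K_P$, and invoke \cite[Theorem~1]{DS}. The only cosmetic difference is the choice of surjection $C\to[0,1]$: the paper uses (a Tietze extension of) the Devil's staircase multiplied by $\chi_C$, whereas you use the dyadic expansion map, realised explicitly as the uniformly convergent series $\sum_{n\ge 1}2^{-n}\chi_{F_n}$; your version has the mild advantage that the affine upper semi-continuous extension to $K_P$ is written down directly rather than left to ``reasoning as above''.
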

\begin{proof}
Assume that $P$ is uncountable. Then there is $C\subset P$ homeomorphic to the usual Cantor set in $[0,1]$ \cite[Theorem 3.2.7]{Srivastava}. 
Let $g\colon P\to[0,1]$ be any continuous function such that $g|_C$ maps $C$ onto $[0,1]$ (say, use the  Tietze extension theorem to extend Devil's staircase function on $C$ to the whole $P$). Then take $\eta=g\cdot \chi_C$ where $\chi_C$ is the characteristic function of $C$ in $P$. The function $\eta\colon P\to [0,1]$ is upper semi-continuous and has the Darboux (intermediate-value) property. Reasoning as above, we can find a Toeplitz minimal shift $(X,T)$ such that (up to affine homeomorphism) $\MT(X)=K_P$ and $\eta=h|_{\MTe(X)}$.
\end{proof}

\section{Final remarks}



We find the following question intriguing: Let $\mu$ denote the M\"{o}bius function extended to $\Z$ in an obvious way (say $\mu(0)=$ and $\mu(-n)=\mu(n)$ for $n\in\N$). Let $X_M$ be the orbit closure of $\mu$ in $\{-1,0,1\}^\Z$. Is $0$ the safe symbol for $X_M$?
Recall that the study of the square-free system in \cite{Sarnak} was motivated by questions about M\"{o}bius function $\mu$.

Some special cases of our results were known to hold for some time. It follows from the general theory of Choquet
simplices that if the set of ergodic measures is dense in the simplex of all invariant measures in the weak$^*$ topology, then it is arcwise connected set \cite{LOS}. Therefore  the set of ergodic measures is arcwise
connected in the weak$^*$ topology for all hereditary $\BB$-free shifts $X_\bb$, as $\overline\Mse(X_\bb)=\Ms(X_\bb)$ by
\cite{BKKPL,KPLW2}. For a concrete example of a shift space with dense set of ergodic measures but without the intermediate entropy property see \cite{GK}.
Ku{\l}aga-Przymus, Lema\'{n}czyk and Weiss
showed also in \cite{KPLW2} that ergodic measures need not be dense among all invariant measures for a general hereditary system, but must be arcwise connected by our result. This phenomenon was previously observed in the Dyck shift by Climenhaga \cite{VC}.

All the results stated in this work remain valid for unilateral shift spaces, that is, closed $\sigma$-invariant subsets of $\Lambda^\N$. This follows directly from \cite[Proposition 2.1]{CT}.

After we have finished writing this paper Ay\c{s}e \c{S}ahin kindly shared
with us the article \cite{Ayse}, where $\dbarom$ continuous arcs of ergodic
measures are constructed for some examples of $\Z$ and $\Z^2$ shift
spaces. Anthony Quas observed that one can use the construction from \cite{QS} (loosely related to the ``grand coupling'' that shows up
in probability theory) to get another proof of Theorem \ref{thm:arcwise} for the case $t=h_\text{top}(X)$, that is, for $\Mse(X)$. This approach does not work for smaller $t$. The reader interested in the result about the measures only, can also use the idea from \cite{Ayse} and coupling with irrational rotations to simplify the proof of Theorem \ref{thm:arcwise} at the cost of loosing the explicit construction of generic points. The latter result is in the spirit of ``single orbit dynamics'' (see remarks about differences between almost everywhere results and single orbit theorems in \cite[pages 8 and 89]{WeissBook}) and we hope the reader will also find it interesting. Besides it gives some information about topological properties of the Besicovitch space of the shift (see the proof of Theorem \ref{thm:arcwise}). We are grateful to Anthony Quas for sharing his idea with us.

\appendix

\section{Uniquely ergodic extensions}\label{sec:appendix-A}
Originally, we proved Theorem \ref{thm:arcwise} using tools developed in this section. After a talk of one of us presented these results during a seminar at the Institute of Mathematics of the Polish Academy of Sciences, professors Lema\'{n}czyk and Przytycki kindly suggested another approach for this proof. As we feel that the following observations might be of the independent interest and allow us to show Theorem \ref{thm:arcwise} avoiding spectral theory of unitary operators we decided to attach them here.

Suppose that $(X,T)$ is a topological factor of $(Y,S)$ and $\pi\colon Y\to X$ is a factor map.
Let $\mu\in\MTe(X)$. We say that $(Y,T)$ is a \emph{uniquely ergodic extension} of $(X,T)$ over $\mu$ if there is a unique measure $\nu\in\mathcal{M}^e_S(Y)$ such that $\pi_*(\nu)=\mu$. We also say that $(Y,S,\nu)$ \emph{uniquely extends} $(X,T,\mu)$ through $\pi\colon Y\to X$.
The following lemma and its corollary explain our interest in uniquely ergodic extensions.

\begin{lem}[Weiss, \cite{WeissBook} Proposition 3.4]\label{lem:Weiss} 
If the ergodic system $(Y,S,\nu)$ uniquely extends $(X,T,\mu)$ over $\pi\colon Y\to X$, $x_0\in \Gen_T(\mu)$ and $y_0\in\pi^{-1}(\{x_0\})$, then $y_0\in \Gen_{S}(\nu)$.
\end{lem}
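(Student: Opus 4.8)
The plan is to use the unique ergodicity of $(Y,S,\nu)$ relative to $\mu$ together with a compactness argument on the orbit averages of $y_0$. First I would take any continuous $f\colon Y\to\R$ and consider the sequence of empirical measures
\[
\nu_N=\frac{1}{N}\sum_{j=0}^{N-1}\delta_{S^j(y_0)}\in\M(Y).
\]
By weak$^*$ compactness of $\M(Y)$, every subsequence of $\{\nu_N\}$ has a convergent sub-subsequence; let $\rho$ be any weak$^*$ limit point. Standard arguments (Krylov--Bogolyubov type) show that $\rho\in\MT[S](Y)$. The key step is to identify $\rho$: I would push $\rho$ forward through $\pi$ and use $\pi\circ S=T\circ\pi$ together with $x_0=\pi(y_0)\in\Gen_T(\mu)$ to conclude that
\[
\pi_*(\rho)=\lim_k \frac{1}{N_k}\sum_{j=0}^{N_k-1}\delta_{T^j(x_0)}=\mu,
\]
since the empirical averages of $x_0$ along $\Gen_T(\mu)$ converge to $\mu$ by definition of genericity, and $\pi_*$ is weak$^*$ continuous.

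Now $\rho$ is an $S$-invariant measure on $Y$ with $\pi_*(\rho)=\mu$. Its ergodic decomposition consists (for $\rho$-a.e.\ ergodic component $\omega$) of ergodic measures with $\pi_*(\omega)$ an ergodic measure projecting into the ergodic decomposition of $\mu$; since $\mu$ is already ergodic, $\pi_*(\omega)=\mu$ for $\rho$-a.e.\ component. By the hypothesis that $(Y,S,\nu)$ is the \emph{unique} ergodic measure over $\mu$, every such component equals $\nu$, hence $\rho=\nu$. Therefore every weak$^*$ limit point of $\{\nu_N\}$ equals $\nu$, which forces $\nu_N\to\nu$ in the weak$^*$ topology; equivalently, for every continuous $f$,
\[
\lim_{N\to\infty}\frac{1}{N}\sum_{j=0}^{N-1}f(S^j(y_0))=\int_Y f\,d\nu,
\]
i.e.\ $y_0\in\Gen_S(\nu)$.

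The main obstacle I anticipate is the measurability/regularity bookkeeping in the ergodic-decomposition step: one must argue carefully that for $\rho$-a.e.\ ergodic component $\omega$ the projection $\pi_*(\omega)$ is ergodic (so that it can be compared with $\mu$), and that the assignment of ergodic components behaves well under the factor map. This is where one invokes that a factor of an ergodic system is ergodic, and that the ergodic decomposition is compatible with factor maps (the disintegration of $\rho$ over its space of ergodic components maps, under $\pi_*$, to the disintegration of $\mu$ over its ergodic components). Once this is in place, uniqueness of the ergodic lift does all the remaining work, and no spectral theory is needed.
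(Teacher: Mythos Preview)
The paper does not supply its own proof of this lemma; it is quoted as Proposition~3.4 from Weiss's \emph{Single Orbit Dynamics} and used as a black box. So there is no in-paper argument to compare against.

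That said, your argument is correct and is essentially the standard proof one finds for this kind of relative unique ergodicity statement. The only point you flag as a potential obstacle---the compatibility of ergodic decomposition with the factor map---is genuinely routine here: if $\rho=\int\omega\,dP(\omega)$ is the ergodic decomposition of $\rho$, then $\pi_*\rho=\int\pi_*\omega\,dP(\omega)$ is a representation of $\mu$ as a barycenter of $T$-invariant measures, each $\pi_*\omega$ being ergodic because a factor of an ergodic system is ergodic; uniqueness of the ergodic decomposition of $\mu$ (which is trivial since $\mu$ is ergodic) then forces $\pi_*\omega=\mu$ for $P$-a.e.\ $\omega$, and the hypothesis finishes the job. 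No further measurability subtleties arise, since the ergodic decomposition on a compact metric space is already a Borel map into $\M(Y)$ and $\pi_*$ is weak$^*$ continuous.
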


The lemma above implies a useful criterion for genericity of a pair of generic points with respect to a product measure.

\begin{cor} \label{cor:Weiss}
Let $(X,T,\mu)$ and $(Y,S,\nu)$ be two ergodic systems with ergodic product.
If $(X\times Y,T\times S,\mu\times \nu)$ uniquely extends $(X,T,\mu)$ over the projection onto the first coordinate $\pi\colon X\times Y\to X$,
then for every $x_0\in \Gen_T(\mu)$ and $y_0 \in Y$ the pair $(x_0,y_0)$ is generic for $\mu\times \nu$.
\end{cor}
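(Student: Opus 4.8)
The plan is to obtain the corollary as an immediate application of Lemma~\ref{lem:Weiss}. First I would note that the hypothesis that the product system is ergodic is exactly what makes $(X\times Y, T\times S, \mu\times\nu)$ an ergodic measure preserving system in the sense of Section~\ref{sec:definitions}; in particular this is consistent with, and implicitly presupposed by, the hypothesis that $(X\times Y, T\times S, \mu\times\nu)$ uniquely extends $(X,T,\mu)$ over the first-coordinate projection $\pi\colon X\times Y\to X$ (a non-ergodic measure projecting to $\mu$ could never be the unique \emph{ergodic} measure in the fibre over $\mu$).

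Next I would simply invoke Lemma~\ref{lem:Weiss} with the roles assigned as follows: the base system $(X,T,\mu)$ stays as it is, the extension "$(Y,S,\nu)$" of that lemma is taken to be $(X\times Y, T\times S, \mu\times\nu)$, and the factor map is the projection $\pi$ onto the first coordinate. Fix $x_0\in\Gen_T(\mu)$ and an arbitrary $y_0\in Y$. Then $(x_0,y_0)$ lies in the fibre $\pi^{-1}(\{x_0\})=\{x_0\}\times Y$, so Lemma~\ref{lem:Weiss} yields $(x_0,y_0)\in\Gen_{T\times S}(\mu\times\nu)$, which is precisely the assertion of the corollary.

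Since the argument is a one-line instantiation of Lemma~\ref{lem:Weiss}, there is essentially no obstacle. The only points worth a sentence of verification are that $\pi$ is genuinely a factor map of $(X\times Y,T\times S)$ onto $(X,T)$ (immediate: $X\times Y$ is compact metric, $T\times S$ is a homeomorphism, $\pi$ is continuous and onto, and $\pi\circ(T\times S)=T\circ\pi$), and that the fibre $\pi^{-1}(\{x_0\})$ is all of $\{x_0\}\times Y$, so that the conclusion applies to \emph{every} $y_0\in Y$ with no additional hypothesis on $y_0$ — this is exactly the strengthening over the ``one generic point times another generic point'' situation, and it is what will be used when coupling an arbitrary generic point with the rotation points $y_{\alpha,\beta}$.
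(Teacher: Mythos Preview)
Your proposal is correct and matches the paper's intended argument: the paper states the corollary without proof, treating it as an immediate instantiation of Lemma~\ref{lem:Weiss} with the extension $(X\times Y, T\times S, \mu\times\nu)$ over the first-coordinate projection, exactly as you spell out. The paper additionally remarks (after the corollary) that $(Y,S,\nu)$ must be uniquely ergodic, hence $\Gen_S(\nu)=Y$; this is a consequence rather than an ingredient of the proof, so your omission of it is fine.
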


Note that in the situation of the above corollary, $(Y,S,\nu)$ in necessarily uniquely ergodic, so $\Gen_S(\nu) = Y$. Lemma \ref{lem:Weiss} was used in place of Lemma \ref{lem:disjoint=>generic} in the first version of our proof of Theorem \ref{thm:arcwise}.

The following relative version of the unique ergodicity theorem of Furstenberg can be used to show that certain extensions are uniquely ergodic over group rotations. The proof can be obtained through an easy modification of the standard argument which can be found in e.g.\ \cite[Theorem 4.21]{EinsiedlerWardBook}; we include it here for the convenience of the reader.


\begin{thm}\label{thm:Furstenberg}
Assume that $T\colon X\to X$ is a homeomorphism of a compact metric space and $\mu\in\MTe(X)$. Let $G$ be a compact group, $\lambda_G$ be the Haar measure for $G$ and $\varphi\colon X\to G$ be a continuous map. Define $Y=X\times G$ and $S\colon Y \to Y$ by $S(x,g)=(T(x),\varphi(x)g)$. If $S$ is ergodic with respect to $\mu\times\lambda_G$, then $(Y,S,\mu\times\lambda_G)$ uniquely extends $(X,T,\mu)$ over the projection $\pi\colon Y\ni(x,g)\mapsto x\in X$.
\end{thm}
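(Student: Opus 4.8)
The plan is to adapt the classical proof that a skew product over a uniquely ergodic base with a group fiber is itself uniquely ergodic (as in \cite[Theorem 4.21]{EinsiedlerWardBook}), but in the relative setting where the base $(X,T)$ carries a fixed ergodic measure $\mu$ rather than being uniquely ergodic. The key observation is that the group $G$ acts on $Y = X\times G$ by $R_h(x,g) = (x, gh)$, and this action commutes with $S$ and preserves $\mu\times\lambda_G$; moreover the averaging over $G$ with respect to $\lambda_G$ of any $S$-invariant measure projecting to $\mu$ again projects to $\mu$ and is $G$-invariant.

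Here is the sequence of steps. First I would fix an arbitrary $\nu\in\mathcal M_S^e(Y)$ with $\pi_*\nu = \mu$ and form its $G$-average $\tilde\nu = \int_G (R_h)_*\nu\, d\lambda_G(h)$. One checks directly that $\tilde\nu$ is $S$-invariant (since $S$ and $R_h$ commute), $G$-invariant, and still satisfies $\pi_*\tilde\nu = \mu$ (because $\pi\circ R_h = \pi$). Second, I would argue that a $G$-invariant measure on $X\times G$ with first marginal $\mu$ must equal $\mu\times\lambda_G$: disintegrating $\tilde\nu$ over $X$ gives, for $\mu$-a.e. $x$, a probability measure $\tilde\nu_x$ on $G$, and $G$-invariance of $\tilde\nu$ forces $\tilde\nu_x$ to be translation-invariant for a.e. $x$, hence equal to $\lambda_G$ by uniqueness of Haar measure; therefore $\tilde\nu = \mu\times\lambda_G$. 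Third, since $\mu\times\lambda_G$ is assumed ergodic for $S$, and $\tilde\nu = \mu\times\lambda_G$ is an average of the measures $(R_h)_*\nu$ (each of which is $S$-invariant), ergodicity of $\mu\times\lambda_G$ forces $(R_h)_*\nu = \mu\times\lambda_G$ for $\lambda_G$-a.e. $h$ — an ergodic measure that is an integral average of invariant measures must coincide with almost every one of them. Finally, picking one such $h$ and applying $(R_h^{-1})_*$, which preserves $\mu\times\lambda_G$ by $G$-invariance, yields $\nu = \mu\times\lambda_G$. Thus $\mu\times\lambda_G$ is the unique element of $\mathcal M_S^e(Y)$ lying above $\mu$, which is exactly the assertion that $(Y,S,\mu\times\lambda_G)$ uniquely extends $(X,T,\mu)$.

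The main obstacle, and the only place requiring genuine care, is the measurable disintegration argument in the second step: one must justify that $G$-invariance of $\tilde\nu$ passes to $G$-invariance of $\mu$-almost every conditional measure $\tilde\nu_x$. This is standard (the disintegration is essentially unique, so pushing forward by $R_h$ and comparing disintegrations gives $(R_h)_*\tilde\nu_x = \tilde\nu_x$ for a.e. $x$, simultaneously for a countable dense set of $h$, hence for all $h$ by continuity), but it is the technical heart of the argument. The third step — deducing that an ergodic measure equal to an average of invariant measures agrees with almost all of them — is also worth spelling out: if $\mu\times\lambda_G = \int_G (R_h)_*\nu\, d\lambda_G(h)$ and the left side is ergodic, then by the characterization of ergodic measures as extreme points of the simplex of invariant measures, the representing measure on that simplex must be a point mass, forcing $(R_h)_*\nu = \mu\times\lambda_G$ for $\lambda_G$-a.e. $h$. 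Everything else is a routine verification of invariance and marginal conditions.
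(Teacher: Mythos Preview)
Your argument is correct, but it follows a different path from the paper's proof. Both approaches exploit the same structural fact---that the right $G$-action $R_h(x,g)=(x,gh)$ commutes with $S$ and preserves $\mu\times\lambda_G$---but they leverage it in different ways. The paper argues via generic points: since $\Gen_S(\mu\times\lambda_G)$ is $R_h$-invariant for every $h$, it must have the form $E_1\times G$ with $\mu(E_1)=1$; then any ergodic $\nu$ projecting to $\mu$ has a generic point over some $x_0\in E_1$, and that point is simultaneously generic for $\mu\times\lambda_G$, forcing $\nu=\mu\times\lambda_G$. Your route instead averages $\nu$ over $G$, identifies the average with $\mu\times\lambda_G$ via disintegration, and then invokes extremality of ergodic measures to collapse the average. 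The paper's proof is slightly more elementary in that it avoids disintegration and the extreme-point characterisation of ergodicity, and it fits the ``single orbit'' philosophy emphasised elsewhere in the paper (and in \cite{WeissBook}); your proof is the natural measure-theoretic analogue and transfers more directly to settings where one prefers not to speak of generic points. Both are standard modifications of the uniquely ergodic case in \cite[Theorem~4.21]{EinsiedlerWardBook}.
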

\begin{proof}
The $S$-invariance of $\mu \times \lambda_G$ is an immediate consequence of Fubini's theorem, since for any $f \in \mathcal{C}(Y)$ we have
\begin{align*}
	\int_Y Sf \;\text{d} (\mu \times \lambda_G)
	&= \int_X \int_G f(Tx, \phi(x)g) \;\text{d} \lambda_G(g) \;\text{d} \mu(x)
	\\&= \int_G \int_X f(Tx, g) \;\text{d} \mu(x) \;\text{d} \lambda_G(g)
	= \int_Y f \;\text{d} (\mu \times \lambda_G).
\end{align*}

Assume that $(Y,S,\mu\times\lambda_G)$ is an ergodic measure preserving system. We claim that for every $h\in G$ the set $\Gen_S(\mu\times\lambda_G)$ is invariant under the map $R_h\colon Y\ni(x,g)\mapsto (x,gh)\in Y$. Note that $(x,g)\in \Gen_S(\mu\times\lambda_G)$ if and only if for every continuous function $f\colon X\to\mathbb{R}$ we have
\begin{equation}\label{eq:generic}
\lim_{N\to\infty}\frac{1}{N}\sum_{n=0}^{N-1} f(S^n(x,g))=\int_Y f \;\text{d}(\mu\times\lambda_G).
\end{equation}
We fix $f\in\mathcal{C}(Y)$ and $h\in G$.
We want to show that \eqref{eq:generic} holds with $(x,gh)$ in place of $(x,g)$.
First note that
\begin{equation}\label{eq:ergodic-av}
\lim_{N\to\infty}\frac{1}{N}\sum_{n=0}^{N-1} f(S^n(x,gh))=\lim_{N\to\infty}\frac{1}{N}\sum_{n=0}^{N-1} f\circ R_h(S^n(x,g)).
\end{equation}
Now use the fact that $(x,g)$ is generic to conclude that the right hand side of \eqref{eq:ergodic-av} converges to
\[
\int_Y f\circ R_h \;\text{d}(\mu\times\lambda_G)=\int_Y f \;\text{d}((R_h)_*(\mu\times\lambda_G))=\int_Y f \;\text{d}(\mu\times\lambda_G),
\]
where the last equality holds because $\mu\times\lambda_G$ is $R_h$ invariant. This shows that for all $h\in G$ we have
$
R_h(\Gen_S(\mu\times\lambda_G))\subset \Gen_S(\mu\times\lambda_G).
$
 Repeating the same argument with $h^{-1}$ in place of $h$ yields the reverse inclusion, and hence for any $h \in G$ we have
\[R_h(\Gen_S(\mu\times\lambda_G))= \Gen_S(\mu\times\lambda_G).\]
Therefore we can find a set $E_1\subset X$ such that $\mu(E_1)=1$ and $\Gen_S(\mu\times\lambda_G)=E_1\times G$.

Let $\nu$ be an ergodic $S$-invariant measure such that $\pi_*(\nu)=\mu$. Let $E_2=\pi(\Gen_S(\nu))$. Then $\mu(E_2)=1$, hence $\mu(E_1 \cap E_2) = 1$ and in particular
there is $x_0\in E_1\cap E_2$. Let $y_0\in \Gen_S(\nu)$ be such that $\pi(y_0)=x_0$. But then $y_0\in \{x_0\}\times G\subset \Gen_S(\mu\times \lambda_G)$, and
$y_0$ is generic for both $\nu$ and $\mu\times\lambda_G$, implying that $\nu=\mu\times\lambda_G$.
\end{proof}

Theorem \ref{thm:Furstenberg} yields the following corollary which may replace Theorem \ref{thm:disjoint-rotation} in the proof of our main result.

\begin{cor}\label{cor:product-ergodic}
If $T\colon X\to X$ is a homeomorphism of a compact metric space and $\mu\in\MTe(X)$, then there exists $\alpha\in\R\setminus\mathbb{Q}$ such that the product system
$(T\times R_\alpha, X\times\cir,\mu\times\lambda)$ is a uniquely ergodic extension of $(X,T,\mu)$ over the projection $\pi\colon X\times\cir \to X$, where $R_\alpha$ is the rotation by $\alpha$ and $\lambda$ denotes the Lebesgue measure on $\cir$ .
\end{cor}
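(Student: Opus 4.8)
The plan is to deduce Corollary \ref{cor:product-ergodic} directly from Theorem \ref{thm:Furstenberg} by choosing the rotation frequency $\alpha$ to kill the interaction between the point spectrum of $(X,T,\mu)$ and the spectrum of $R_\alpha$. First I would set $G = \cir$ with Haar measure $\lambda_G = \lambda$, and take $\varphi\colon X\to\cir$ to be the \emph{constant} map $\varphi\equiv\alpha$. Then the skew product $S(x,t) = (T(x),\varphi(x)+t) = (T(x),t+\alpha)$ is exactly the direct product $T\times R_\alpha$ on $Y = X\times\cir$, so Theorem \ref{thm:Furstenberg} applies verbatim and tells us: \emph{if} $\mu\times\lambda$ is ergodic for $T\times R_\alpha$, then $(X\times\cir, T\times R_\alpha, \mu\times\lambda)$ uniquely extends $(X,T,\mu)$ over the first projection. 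So the whole corollary reduces to producing an irrational $\alpha$ for which the product $\mu\times\lambda$ is ergodic.

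Next I would invoke the ergodicity criterion for products, Lemma \ref{lem:product_erg}: $T\times R_\alpha$ is ergodic with respect to $\mu\times\lambda$ if and only if $H(T,\mu)\cap H(R_\alpha,\lambda) = \{1\}$. Since $H(R_\alpha,\lambda) = \{\exp(2\pi i k\alpha): k\in\Z\}$, this intersection is trivial exactly when $\exp(2\pi i k\alpha)\notin H(T,\mu)$ for every $k\in\Z\setminus\{0\}$. The point spectrum $H(T,\mu)$ of the ergodic system $(X,T,\mu)$ is a countable subgroup of $\mathbb{S}^1$ (as recalled in Section \ref{sec:spectral}, since $L^2(X,\mu)$ is separable), so it suffices to avoid a countable set of "bad" rotation numbers. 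For each $k\in\Z\setminus\{0\}$ the set of $\alpha\in[0,1)$ with $\exp(2\pi i k\alpha)\in H(T,\mu)$ is countable, hence the union over all such $k$ is still countable; picking $\alpha$ outside this countable set (and outside $\Q$, which is also countable) works. This is precisely the argument already used to prove Theorem \ref{thm:disjoint-rotation}, so I would simply cite it: the $\alpha$ furnished there satisfies $\exp(2\pi i k\alpha)\notin H(T,\mu)$ for all $k\neq 0$.

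Putting it together: choose $\alpha\in\R\setminus\Q$ with $\exp(2\pi i k\alpha)\notin H(T,\mu)$ for all $k\in\Z\setminus\{0\}$ (via Theorem \ref{thm:disjoint-rotation} or its proof); by Lemma \ref{lem:product_erg} the system $(X\times\cir, T\times R_\alpha, \mu\times\lambda)$ is ergodic; then apply Theorem \ref{thm:Furstenberg} with $G = \cir$ and the constant cocycle $\varphi\equiv\alpha$ to conclude that this product uniquely extends $(X,T,\mu)$ over the projection $\pi$.

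I do not expect a genuine obstacle here — the corollary is essentially a repackaging of already-established facts. The only point requiring a little care is making sure the hypothesis of Theorem \ref{thm:Furstenberg} is exactly "$S$ is ergodic with respect to $\mu\times\lambda_G$" and that a constant cocycle is admissible (it is continuous, so it is), and that $\lambda$ is indeed the Haar measure on $\cir$ (it is). If one wanted to avoid even citing Theorem \ref{thm:disjoint-rotation}, the self-contained alternative is the countability argument spelled out above, which takes only two lines. Either way the proof is short.
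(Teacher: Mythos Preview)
Your proposal is correct and follows essentially the same approach as the paper: reduce via Theorem \ref{thm:Furstenberg} (with the constant cocycle $\varphi\equiv\alpha$) to finding an irrational $\alpha$ making $\mu\times\lambda$ ergodic, then use Lemma \ref{lem:product_erg} together with the countability of $H(T,\mu)$ to produce such an $\alpha$. The paper's proof is terser but identical in substance; your extra remarks about the constant cocycle being admissible and the option of citing Theorem \ref{thm:disjoint-rotation} are harmless elaborations.
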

\begin{proof}
By Theorem \ref{thm:Furstenberg}, it is enough to find $\alpha\in\R\setminus\mathbb{Q}$ such that $T\times R_\alpha$ is ergodic with respect to $\mu\times\lambda$. To this end, using spectral theory discussed in Section \ref{sec:spectral}, it will suffice to ensure that only common element of $H(T,\mu)$ and $H(R_\alpha, \lambda) = \{ \exp(2 \pi i k \alpha : k \in \Z\}$ is $1$. Because $H(T,\mu)$ is at most countable, this will be the case for all but countably many choices of $\alpha$.
\end{proof}



\begin{thebibliography}{XX}

\bibitem{ALR} E.-H. El Abdalaoui, M. Lema\'{n}czyk, T. De La Rue, \emph{A dynamical point of view on the set of $\mathscr{B}$-free integers}, Int. Math. Res. Not. IMRN 2015, no. 16, 7258--7286. 

\bibitem{Avdeeva} M. Avdeeva, \emph{Variance of $\BB$-free integers in short intervals}, preprint 2015. \url{arXiv:1512.00149 [math.DS]}
\bibitem{spacing} J. Banks, T.T.D. Nguyen, P. Oprocha and B. Trotta, \emph{Dynamics of Spacing Shifts}, Discrete Contin. Dyn. Syst. \textbf{33} (2013), no. 9, 4207--4232.
\bibitem{B} J. Banks, P. Oprocha and B. Stanley, \emph{Transitive sofic spacing shifts}, Discrete Contin. Dyn. Syst. \textbf{35} (2015), no. 10, 4743--4764. 
\bibitem{BKKPL} A. Bartnicka, S. Kasjan, J. Ku{\l}aga-Przymus, and M. Lema\'{n}czyk, \emph{$\BB$-free sets and
dynamics}, Preprint, 2015.
\bibitem{CS} F. Cellarosi, Y. G. Sinai, \emph{Ergodic Properties of Square-Free Numbers}.
Journal of the European Mathematical Society, \textbf{15} (2013) , 1343-1374.
\bibitem{VC} Vaughn Climenhaga's Math Blog (accessed on September 26, 2016). \url{http://vaughnclimenhaga.wordpress.com/2012/04/18/a-useful-example-for-the-space-of-ergodic-measures/}
\bibitem{CT} V. Climenhaga,  D. J. Thompson, \emph{Intrinsic ergodicity beyond specification: $\beta$-shifts, $S$-gap shifts, and their factors}, Israel J. Math. \textbf{192} (2012), no. 2, 785--817. 

\bibitem{D} T. Downarowicz, \emph{The Choquet simplex of invariant measures for minimal flows}, Israel J. Math. \textbf{74} (1991), no. 2-3, 241--256. 
\bibitem{DS} T. Downarowicz, J. Serafin, \emph{Possible entropy functions}, Israel J. Math. \textbf{135} (2003), 221--250. 
\bibitem{D-book} T. Downarowicz, \emph{Entropy in dynamical systems}, New Mathematical Monographs, 18. Cambridge University Press, Cambridge, 2011. xii+391 pp. 
\bibitem{EinsiedlerWardBook}
M. Einsiedler and T. Ward, \emph{Ergodic theory with a view towards
  number theory}, Graduate Texts in Mathematics, vol. \textbf{259}, Springer-Verlag
  London, Ltd., London, 2011. 

\bibitem{Furstenberg} H. Furstenberg, \emph{Disjointness in ergodic theory, minimal sets, and a problem in Diophantine approximation}, Math. Systems Theory, \textbf{1} (1967) 1--49.
\bibitem{FurstenbergBook} H. Furstenberg, \emph{Recurrence in ergodic theory and combinatorial number theory}. Princeton University Press, Princeton, N.J., 1981. 
\bibitem{GK} K. Gelfert, D. Kwietniak, \emph{On density of ergodic measures and generic points},
    Ergodic Theory Dynam. Systems, to appear.
    {\url{arXiv:1404.0456 [math.DS]}}

\bibitem{GlasnerBook} E. Glasner, \emph{Ergodic theory via joinings}, Mathematical Surveys and Monographs, 101. American Mathematical Society, Providence, RI, 2003.
\bibitem{H} R. Haydon, \emph{A new proof that every Polish space is the extreme boundary of a simplex}, Bull.
London Math. Soc. \textbf{7} (1975), 97--100.
\bibitem{Katok} A. Katok, \emph{Nonuniform hyperbolicity and structure of smooth dynamical systems} in \emph{Proceedings of the International Congress of Mathematicians}, Vol. 1, 2 (Warsaw, 1983), 1245--1253, PWN, Warszawa, 1984. 
\bibitem{KM} A. Katok, L. Mendoza, \emph{Dynamical systems with nonuniformly hyperbolic behavior}, supplement
to \emph{Introduction to the Modern Theory of Dynamical Systems} by A. Katok and B. Hasselblatt,
Cambridge University Press, Cambridge, 1995, pp. 659--700.

\bibitem{KerrLi} D. Kerr, H. Li, \emph{Independence in topological and $C\sp *$-dynamics}, Math. Ann. {\bf 338} (2007), no. 4, 869--926.

\bibitem{KPLW} J. Ku{\l}aga-Przymus, M. Lema\'{n}czyk, B. Weiss, \emph{On invariant measures for $\mathscr{B}$-free systems}. Proc. Lond. Math. Soc. (3), 110, 1435-1474, 2015.
\bibitem{KPLW2} J. Ku{\l}aga-Przymus, M. Lema\'{n}czyk, B. Weiss, \emph{Hereditary subshifts whose simplex of invariant measures is Poulsen}. Preprint \url{	arXiv:1507.00714 [math.DS]}, 2015.
\bibitem{LZ} K. Lau and A. Zame, \emph{On weak mixing of cascades}, Math. Systems Theory, \textbf{6} (1972/73), 307--311.
\bibitem{LOS} J. Lindenstrauss, G. Olsen, Y. Sternfeld, \emph{The Poulsen simplex}, Ann. Inst. Fourier (Grenoble) \textbf{28} (1978), no. 1, vi, 91--114. 
\bibitem{hh4} D. Kwietniak, {\it Topological entropy and distributional chaos in hereditary shifts with applications to spacing shifts and beta shifts}, Discrete Contin. Dyn. Syst. \textbf{33} (2013), no. 6, 2451--2467.

\bibitem{Parry} W. Parry. \emph{Topics in ergodic theory.} Vol. 75. Cambridge University Press, 2004.
\bibitem{Peckner} R. Peckner \emph{Uniqueness of the measure of maximal entropy for the squarefree flow},
Israel J. Math. \textbf{210} (2015), no. 1, 335--357. 
\bibitem{Ayse} A. Quas, A. \c{S}ahin, \emph{Entropy gaps and locally maximal entropy in $\Z^d$ subshifts}, Ergodic Theory Dynam. Systems 23 (2003), no. 4, 1227--1245. 
\bibitem{QS} A. Quas, T. Soo, \emph{Ergodic universality of some topological dynamical systems}, Trans. Amer. Math. Soc. \textbf{368} (2016), no. 6, 4137--4170. 
\bibitem{R} A. R\'{e}nyi, \emph{Representations of real numbers and their ergodic properties}. Acta Math. Acad. Sci. Hungar. \textbf{8} (1957), 477--493.
\bibitem{RS} E. A. Robinson,  A. \c{S}ahin, \emph{On the absence of invariant measures with locally maximal entropy for a class of $\mathbf{Z}^d$ shifts of finite type}, Proc. Amer. Math. Soc. \textbf{127} (1999), no. 11, 3309--3318. 
\bibitem{Rudolph} D. Rudolph. \emph{Fundamentals of measurable dynamics: Ergodic theory on Lebesgue spaces.} Oxford University Press, 1990.
\bibitem{Sarnak} P. Sarnak. \emph{Three lectures on the M\"{o}bius function randomness and dynamics (Lecture 1)}.
\url{http://publications.ias.edu/sites/default/files/MobiusFunctionsLectures(2).pdf}.
\bibitem{Shields} P. Shields. \emph{The Ergodic Theory of Discrete Sample Path.} Vol. 13. American Mathematical Society, 1991.
\bibitem{Srivastava} S. M. Srivastava, \emph{A course on Borel sets}. Graduate Texts in Mathematics, \textbf{180}. Springer-Verlag, New York, 1998. 
\bibitem{Stanley} B. Stanley, \emph{Bounded density shifts}, Ergodic Theory Dynam. Systems {\bf 33} (2013), no. 6, 1891--1928. 

\bibitem{Sun1} P. Sun, \emph{Zero-entropy invariant measures for skew product diffeomorphisms}, Ergodic Theory Dynam. Systems \textbf{30} (2010), no. 3, 923--930. 
\bibitem{Sun2} P. Sun, \emph{Measures of intermediate entropies for skew product diffeomorphisms}, Discrete Contin. Dyn. Syst. \textbf{27} (2010), no. 3, 1219--1231. 
\bibitem{Ures} R. Ures, \emph{Intrinsic ergodicity of partially hyperbolic diffeomorphisms with a hyperbolic linear part}, Proc. Amer. Math. Soc. \textbf{140} (2012), no. 6, 1973--1985. MR2888185
\bibitem{WeissBook} B. Weiss, \emph{Single orbit dynamics}, CBMS Regional Conference Series
  in Mathematics, vol.~95, American Mathematical Society, Providence, RI, 2000. 

\bibitem{Willard} S. Willard, \emph{General topology}, Addison-Wesley Publishing Co., Reading, Mass.-London-Don Mills, Ont. 1970. 


\end{thebibliography}

\end{document}